\newcommand{\DSD}{{\rm DSD}}
\newcommand{\diag}{{\rm diag}}
\newcommand{\vol}{{\rm vol}}
\title{Computing Diffusion State Distance using Green's Function and
  Heat Kernel on Graphs}
\author{
Edward Boehnlein\inst{1}
\and
Peter Chin \inst{2} 
\thanks{This author is supported in part by NSF grant DMS 1222567 as well as AFOSR grant FA9550-12-1-0136.}
\and
Amit Sinha \inst{3} 
\and
Linyuan Lu \inst{4} 
\thanks{Research supported in part by NSF
grant DMS 1300547 and ONR grant N00014-13-1-0717.}
}
\institute{University of South Carolina, Columbia, SC 29208, USA,
\email{boehnlei@email.sc.edu}
\and 
Boston University, Boston, MA 02215, USA,
\email{spchin@cs.bu.edu}
\and 
Boston University, Boston, MA 02215, USA,
\email{amits@bu.edu}
\and
University of South Carolina, Columbia, SC 29208, USA,
\email{lu@math.sc.edu}
}
\begin{document}
\maketitle

\begin{abstract}
  The diffusion state distance (DSD) was introduced by
  Cao-Zhang-Park-Daniels-Crovella-Cowen-Hescott [{\em PLoS ONE, 2013}]
  to capture functional similarity in protein-protein interaction
  networks. They proved the convergence of DSD for non-bipartite graphs.
  In this paper, we extend the DSD to bipartite graphs using
  lazy-random walks and consider the general $L_q$-version of DSD.
  We discovered the connection between the DSD $L_q$-distance and 
  Green's function, which was studied  by Chung and Yau [{\em
J. Combinatorial Theory (A), 2000}]. Based on that, we computed 
the DSD $L_q$-distance for Paths, Cycles, Hypercubes, as well as
random graphs $G(n,p)$ and $G(w_1,\ldots, w_n)$. We also examined
the DSD distances of two biological networks. 
\end{abstract}

\section{Introduction}
Recently, the diffusion state distance (DSD, for short) was introduced in \cite{DSD}
to capture functional similarity in protein-protein interaction (PPI)
networks. The diffusion state distance  is much more effective than the classical 
shortest-path distance for the problem of transferring functional
labels across nodes in PPI networks,  based on evidence presented in \cite{DSD}.
The definition of DSD is purely graph theoretic and is based on random walks.

Let $G=(V,E)$ be a simple undirected graph on the vertex set 
$\{v_1,v_2,\ldots, v_n\}$. For any two vertices $u$ and $v$,
let $He^{\{k\}}(u,v)$ be the expected number of times that a random walk
starting at node $u$ and proceeding for $k$ steps, will visit node
$v$.  Let $He^{\{k\}}(u)$ be the vector
$(He^{\{k\}}(u,v_1),\ldots, He^{\{k\}}(u,v_n))$.
The diffusion state distance (or DSD, for short)  between two vertices $u$ and
$v$ is defined as 
$$DSD(u,v)=\lim_{k\to\infty}\left\|He^{\{k\}}(u)-He^{\{k\}}(v)\right\|_1$$
provided the limit exists (see \cite{DSD}).  Here the $L_1$-norm is
not essential. Generally, for $q\geq 1$, 
one can define the DSD $L_q$-distance as
$$DSD_q(u,v)=\lim_{k\to\infty}\left\|He^{\{k\}}(u)-He^{\{k\}}(v)\right\|_q$$
provided the limit exists. (We use $L_q$ rather than $L_p$ to avoid confusion, as $p$
will be used as a probability throughout the paper.)

In \cite{DSD}, Cowen et al. showed that the above limit always exists
whenever the random walk on $G$ is ergodic (i.e., $G$ is connected
non-bipartite graph). They also prove that this distance can be
computed by the following formula:
$$DSD(u,v)=\|(1_u-1_v)(I-D^{-1}A+W)^{-1}\|_1$$
where $D$ is the diagonal degree matrix, $A$ is the adjacency matrix,
and $W$ is the constant matrix in which each row is a
copy of $\pi$ , $\pi=\frac{1}{\sum_{i=1}^nd_i}(d_1, \ldots, d_n)$ is the unique steady state distribution.

A natural question is how to define the diffusion state distance for a
bipartite graph. We suggest to use the lazy random walk.
For a given $\alpha \in (0,1)$, one can choose to stay at the current
node $u$ with probability $\alpha$, and choose to move to one of its
neighbors with probability $(1-\alpha)/d_u$. In other words, the
transitive matrix of the $\alpha$-lazy random walk is
$$T_\alpha=\alpha I + (1-\alpha)D^{-1}A.$$

Similarly, 
let $He^{\{k\}}_\alpha(u,v)$ be the expected number of times that the
$\alpha$-lazy random walk
starting at node $u$ and proceeding for $k$ steps, will visit node
$v$.  Let $He^{\{k\}}_\alpha(u)$ be the vector
$(He^{\{k\}}_\alpha(u,v_1),\ldots, He^{\{k\}}_\alpha(u,v_n))$.
The $\alpha$-diffusion state distance $L_q$-distance between two vertices $u$ and $v$ is 
$$DSD^\alpha_q(u,v)=\lim_{k\to\infty}\left\|He^{\{k\}}_\alpha(u)-He^{\{k\}}_\alpha(v)\right\|_q.$$

\begin{theorem}\label{t1}
  For any connected graph $G$ and $\alpha\in (0,1)$, the 
$DSD^\alpha_q(u,v)$ is always
  well-defined and satisfies
  \begin{equation}
    \label{eq:4}
DSD^\alpha_q(u,v)=(1-\alpha)^{-1}\| ({\bf 1}_u-{\bf 1}_v) {\mathbb G}\|_q.     
  \end{equation}
Here $\mathbb G$ is the matrix of  Green's function of $G$.
\end{theorem}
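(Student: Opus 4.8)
The plan is to separate the statement into an analytic claim — that the limit exists and equals a Markov-chain ``fundamental matrix'' expression — and an algebraic claim that rewrites this expression through the Green's function, producing the scalar factor $(1-\alpha)^{-1}$.

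First I would observe that $He^{\{k\}}_\alpha(u) = {\bf 1}_u\sum_{t=0}^{k}T_\alpha^{t}$ (counting the visits at times $0,1,\dots,k$), so that $He^{\{k\}}_\alpha(u)-He^{\{k\}}_\alpha(v)=({\bf 1}_u-{\bf 1}_v)\sum_{t=0}^{k}T_\alpha^{t}$. The chain with transition matrix $T_\alpha$ is irreducible because $G$ is connected and aperiodic because $\alpha\in(0,1)$ puts a self-loop at every vertex; hence it is ergodic, and since one checks $\pi T_\alpha=\pi$, the powers $T_\alpha^{t}$ converge to $W$ geometrically fast. The elementary identities $T_\alpha W=WT_\alpha=W^2=W$ give $(T_\alpha-W)^{t}=T_\alpha^{t}-W$ for $t\ge 1$ and, together with ergodicity, $\rho(T_\alpha-W)<1$, so the Neumann series $\sum_{t\ge0}(T_\alpha-W)^{t}=(I-T_\alpha+W)^{-1}$ converges. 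Because every row of $W$ equals $\pi$, we have $({\bf 1}_u-{\bf 1}_v)W=\pi-\pi=0$; hence $({\bf 1}_u-{\bf 1}_v)\sum_{t=0}^{k}T_\alpha^{t}=({\bf 1}_u-{\bf 1}_v)\sum_{t=0}^{k}(T_\alpha^{t}-W)$, which converges as $k\to\infty$ to $({\bf 1}_u-{\bf 1}_v)\big[(I-W)+\sum_{t\ge1}(T_\alpha-W)^{t}\big]=({\bf 1}_u-{\bf 1}_v)\big[(I-T_\alpha+W)^{-1}-W\big]=({\bf 1}_u-{\bf 1}_v)(I-T_\alpha+W)^{-1}$, using $({\bf 1}_u-{\bf 1}_v)W=0$ once more. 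This proves that $DSD^\alpha_q(u,v)$ is well-defined and equals $\|({\bf 1}_u-{\bf 1}_v)(I-T_\alpha+W)^{-1}\|_q$.

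For the algebraic step I would write $N=I-D^{-1}A$ (the random-walk Laplacian), so that $I-T_\alpha+W=(1-\alpha)N+W$, and exploit the structural facts $N{\bf 1}=0$, $\pi N=0$, $NW=WN=0$ together with the splitting $\mathbb{R}^{n}=\mathrm{span}\{{\bf 1}\}\oplus\ker\pi$, on which $W$ is the projection onto $\mathrm{span}\{{\bf 1}\}$ and $N$ restricts to an invertible map of $\ker\pi$. Taking $\mathbb{G}$ to be the Green's function of $G$ — i.e. the group inverse $N^{\#}$, characterized by $N\mathbb{G}N=N$, $\mathbb{G}N\mathbb{G}=\mathbb{G}$, $N\mathbb{G}=\mathbb{G}N=I-W$ (this is the random-walk normalization of the Chung--Yau Green's function; the normalized Laplacian version is $D^{1/2}\mathbb{G}D^{-1/2}$) — one verifies $\big((1-\alpha)N+W\big)^{-1}=W+(1-\alpha)^{-1}\mathbb{G}$ by multiplying out: the cross terms vanish because $NW=WN=0$ and $\pi\mathbb{G}=0$, while $N\mathbb{G}=I-W$ and $W^2=W$ collapse the product to $I$. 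Since $({\bf 1}_u-{\bf 1}_v)W=0$, this gives $({\bf 1}_u-{\bf 1}_v)(I-T_\alpha+W)^{-1}=(1-\alpha)^{-1}({\bf 1}_u-{\bf 1}_v)\mathbb{G}$, and taking $L_q$-norms yields \eqref{eq:4}.

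The analytic part is routine except for the fact $\rho(T_\alpha-W)<1$, which is exactly where laziness enters: for a bipartite graph at $\alpha=0$ the walk is periodic, the powers do not converge, and the fundamental matrix fails to exist — this is the whole reason for passing to the lazy walk. The genuine content is the identity $\big((1-\alpha)N+W\big)^{-1}=W+(1-\alpha)^{-1}\mathbb{G}$ and, underneath it, pinning down $\mathbb{G}$ in precisely the normalization that makes the $\alpha$-dependence factor as the scalar $(1-\alpha)^{-1}$; verifying the group-inverse relations and the commutations $NW=WN=0$ carefully is the step I would watch most closely.
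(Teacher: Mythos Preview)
Your proof is correct but follows a different route from the paper. The paper works entirely in spectral coordinates: it diagonalizes $T_\alpha=D^{-1/2}O(I-(1-\alpha)\Lambda)O'D^{1/2}$, writes each entry $p_\alpha^{\{t\}}(u,j)$ as a sum over eigenmodes, subtracts the stationary term, sums the resulting geometric series in each eigenvalue $\lambda_l^*=1-(1-\alpha)\lambda_l$, and then recognizes the outcome as the spectral formula \eqref{eq:5} for $\mathbb{G}$. Your argument is operator-theoretic instead: you pass through the Markov-chain fundamental matrix $(I-T_\alpha+W)^{-1}$ and then invert $(1-\alpha)L+W$ block-wise using the group-inverse characterization of $\mathbb{G}$. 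The paper's approach has the advantage that the convergence rate $\bar\lambda^*=\max_l|\lambda_l^*|$ falls out for free, which is exactly what feeds Remark~\ref{r1}; your approach is coordinate-free and makes transparent why the $\alpha$-dependence collapses to the scalar $(1-\alpha)^{-1}$. One small point to tighten: the identities you actually use in the multiplication check are $N\mathbb{G}=I-W$ and $\pi\mathbb{G}=0$, which are the \emph{left} analogues of the paper's defining relations \eqref{eq:g1}--\eqref{eq:g2} ($\mathbb{G}L=I-W$ and $\mathbb{G}{\bf 1}=0$); they do hold, but you should either invoke the group-inverse commutation $L\mathbb{G}=\mathbb{G}L$ explicitly or verify them via \eqref{eq:5}, rather than leave them implicit.
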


Observe that $(1-\alpha)DSD^\alpha_q(u,v)$ is independent of the choice of
$\alpha$. Naturally, we define
the DSD $L_q$-distance of any graph $G$ as:
$$DSD_q(u,v):=\lim_{\alpha\to 0} (1-\alpha)DSD^\alpha_q(u,v)
=\| ({\bf 1}_u-{\bf 1}_v) {\mathbb G}\|_q.$$
This definition extends the original definition for non-bipartite graphs.

With properly chosen $\alpha$,
$\|He^{\{k\}}_\alpha(u)-He^{\{k\}}_\alpha(v)\|_q$ converges faster
than \linebreak $\|He^{\{k\}}(u)-He^{\{k\}}(v)\|_q$. This fact leads to a faster algorithm 
to estimate a single distance $DSD_q(u,v)$ using random walks. We will discuss it in
Remark \ref{r1}. 

Green's function was introduced in 1828 by George Green \cite{Green}
to solve some partial differential equations, and it has found many applications
(e.g. \cite{BGT}, \cite{CY},\cite{book},  \cite{Duffy}, \cite{Hedin}, \cite{SH}).

The Green's function on graphs  was first 
investigated by Chung and Yau \cite{CY} in 2000.
Given a graph $G=(V,E)$ and a given function $g\colon V\to {\mathbb R}$,
consider the problem to find $f$ satisfying
the discrete Laplace equation
$$L f= \sum_{y\in V} (f(x)-f(y))p_{xy}=g(x).$$
Here $p_{xy}$ is the transition probability of the random walk from
$x$ to $y$. Roughly speaking, Green's function is the left inverse
operator of $L$ (for the graphs with boundary). 
It is closely related to the Heat kernel of the
graphs (see also \cite{Davies}) and the normalized Laplacian.


In this paper, we will use Green's function to compute the DSD
$L_q$-distance for various graphs. The maximum DSD $L_q$-distance
varies from graphs to graphs. The maximum 
DSD $L_q$-distance for paths and cycles are at the order of
$\Theta(n^{1+1/q})$ while the $L_q$-distance for some random graphs
$G(n,p)$ and $G(w_1,\ldots, w_n)$ are constant for some ranges of
$p$. The hypercubes are somehow between the two classes. The DSD
$L_1$-distance is $\Omega(n)$ while the $L_q$-distance is $\Theta(1)$
for $q>1$. Our method for random graphs is based on the strong
concentration of the Laplacian eigenvalues.

The paper is organized as follows. In Section 2, we will briefly
review the terminology on the Laplacian eigenvalues, Green's Function,
and heat kernel. The proof of Theorem \ref{t1} will be proved in
Section 3. In Section 4, we apply Green's function to calculate the DSD
distance for various symmetric graphs like paths, cycles, and hypercubes.
We will calculate the DSD $L_2$-distance for random graphs $G(n,p)$
and $G(w_1,w_2,\ldots, w_n)$ in Section 5. In the last section, we
examined two brain networks: a cat and a Rhesus monkey. The
distributions of the DSD distances are calculated.


\section{Notation and background}
In this paper, we only consider undirected simple graph $G=(V,E)$ with
the vertex set $V$ and the edge set $E$. For each vertex $x\in V$, the
{\em neighborhood} of $x$, denoted by $N(x)$, is the set of vertices
adjacent to $x$. The {\em degree} of $x$, denoted by $d_x$, is the
cardinality of $N(x)$. We also denote the maximum degree by $\Delta$
and the minimum degree by $\delta$.

Without loss of generalization, we assume that the set of vertices
is ordered and assume $V=[n]=\{1,2,\ldots, n\}$.
Let $A$ be the adjacency matrix and
$D=\diag(d_1,\ldots,d_{n})$ be the diagonal
matrix of degrees.  For a given subset $S$, let the volume of $S$
to be $\vol(S):=\sum_{i\in S}d_i$. In particular, we write
$\vol(G)=\vol(V)=\sum_{i=1}^n d_i$.

Let $V^*$ be the linear space of all real functions on $V$.
The {\em discrete Laplace operator} $L \colon V^* \to V^*$ is defined
as
$$L(f)(x)=\sum_{y\in N(x)}\frac{1}{d_x}(f(x)-f(y)).$$
The Laplace operator can also written as a $(n\times n)$-matrix:
$$L=I-D^{-1}A.$$
Here $D^{-1}A$ is the transition probability matrix of the (uniform) random walk on
$G$. Note that $L$ is not symmetric. We consider a symmetric version
$${\cal L}:=I-D^{-1/2}AD^{-1/2}=D^{1/2}LD^{-1/2},$$
which is so called the {\em normalized Laplacian}. Both $L$ and ${\cal L}$
have the same set of eigenvalues.
The eigenvalues of ${\cal L}$ can be listed as
$$0=\lambda_0\leq \lambda_1\leq \lambda_2\leq \cdots \leq
\lambda_{n-1}\leq 2.$$
The eigenvalue $\lambda_1>0$ if and only if $G$ is connected while
$\lambda_{n-1}=2$ if and only if $G$ is a bipartite graph. Let $\phi_0,
\phi_1,\ldots, \phi_{n-1}$ be a set of orthogonal unit eigenvectors. 
Here $\phi_0=\frac{1}{\sqrt{\vol(G)}}(\sqrt{d_1},\ldots, \sqrt{d_n})$ is the positive unit
eigenvector for $\lambda_0=0$ and $\phi_i$ is the eigenvector for
$\lambda_i$ ($1\leq i\leq n-1$).

Let $O=(\phi_0,\ldots,\phi_{n-1})$ and $\Lambda=\diag(0, \lambda_1,
\ldots, \lambda_{n-1})$. Then $O$ is an orthogonal matrix and
${\cal L}$ be diagonalized as
\begin{equation}
  \label{eq:31}
{\cal L}=O\Lambda O'.  
\end{equation}

Equivalently, we have
\begin{equation}
  \label{eq:3}
L=D^{-1/2}O\Lambda O'D^{1/2}.  
\end{equation}

The {\em Green's function} ${\mathbb G}$ is the matrix with its entries, indexed
by vertices $x$ and $y$, defined by a set of two equations:
\begin{align}
\label{eq:g1}
{\mathbb G}L(x,y)&=I(x,y)-\frac{d_y}{\vol(G)}, \\
\label{eq:g2}
 {\mathbb G}{\bf 1}&=0.  
\end{align}
(This is the so-called Green's function for graphs without boundary in \cite{CY}.)

The {\em normalized Green's function} ${\cal G}$ is defined similarly:
$${\cal GL}(x,y)=I(x,y)-\frac{\sqrt{d_xd_y}}{\vol(G)}.$$
The matrices $\mathbb{G}$ and $\mathcal{G}$ are related by
$${\cal G}=D^{1/2}{\mathbb G}D^{-1/2}.$$
Alternatively, $\cal G$ can be defined using the eigenvalues and
eigenvectors of $\cal L$ as follows:
$${\cal G}=O\Lambda^{\{-1\}}O',$$
where $\Lambda^{\{-1\}}=\diag(0, \lambda_1^{-1},
\ldots, \lambda_{n-1}^{-1})$.
Thus, we have
\begin{equation}
  \label{eq:5}
  {\mathbb G}(x,y)=\sum_{l=1}^{n-1}\frac{1}{\lambda_l} \sqrt{\frac{d_y}{d_x}}\phi_l(x)\phi_l(y).
\end{equation}
 
For any real $t\geq 0$, the heat kernel ${\cal H}_t$ is defined as
$${\cal H}_t=e^{-t{\cal L}}.$$
Thus, $${\cal H}_t(x,y)=\sum_{l=0}^{n-1} e^{-\lambda_i
  t}\phi_l(x)\phi_l(y).$$
The heat kernel ${\cal H}_t$ satisfies the heat equation
$$\frac{d}{dt}{\cal H}_tf=-{\cal L}{\cal H}_tf.$$
The relation of the heat kernel and Green's function is  given by
$${\cal G}=\int_0^\infty {\cal H}_tdt -\phi_0'\phi_0.$$
The heat kernel can be used to compute Green's function for the
Cartesian product of two graphs. We will omit the details here.
Readers are directed to  \cite{CY} and \cite{fan4} for the further
information.

\section{Proof of main theorem}
\begin{proof}[Proof of Theorem \ref{t1}:]
Rewrite the transition probability matrix $T_\alpha$ as
\begin{align*}
T_\alpha&=\alpha I +(1-\alpha)D^{-1}A.\\
&=D^{-1/2}(\alpha I + (1-\alpha) D^{-1/2}AD^{-1/2})D^{1/2}\\
&=D^{-1/2}(\alpha I+(1-\alpha)(I-{\cal L}))D^{1/2}\\
&=D^{-1/2}(I-(1-\alpha){\cal L})D^{1/2}.
\end{align*}
For $k=0,1,\ldots, n-1$, let $\lambda^*_k=1-(1-\alpha)\lambda_k$
and $\Lambda^*=diag(\lambda^*_0,\ldots, \lambda^*_{n-1})=I -(1-\alpha)\Lambda$.
Applying Equation \eqref{eq:3}, we get
$$T_\alpha=D^{-1/2}O\Lambda^* O'D^{1/2}
=(O'D^{1/2})^{-1}\Lambda^* O'D^{1/2}.$$ 
Then for any $t \ge 1$, the $t$-step transition matrix is 
$T_\alpha^{t } =  (OD^{1/2})^{-1} \Lambda^{*t} OD^{1/2}=D^{-1/2}O\Lambda^{*t} O'D^{1/2}
.$ 
Denote $p^{\{t\}}_\alpha (u,j)$ as the $(u,j)^{th}$ entry in $T_\alpha^{t}$.
\begin{align*}
p^{\{t\}}_\alpha(u,j)&= \sum_{l=0}^{n-1}  (\lambda_l^*)^t \sqrt{\frac{d_j}{d_u}} 
\phi_l(u)\phi_l(j)\\
&= \frac{d_j}{\vol(G)} +\sum_{l=1}^{n-1}(\lambda_l^*)^t \sqrt{\frac{d_j}{d_u}} \phi_l(u)\phi_l(j).
\end{align*}
Thus,
$$He^{\{k\}}_\alpha(u,j)-
He^{\{k\}}_\alpha(v,j)=\sum_{t=0}^k\sum_{l=1}^{n-1} (\lambda_l^*)^t
d_j^{1/2}\phi_l(j)( d_u^{-1/2}\phi_l(u)-d_v^{-1/2}\phi_l(v)).$$
The limit $\lim_{k\to\infty}He^{\{k\}}_\alpha(u,j)-
He^{\{k\}}_\alpha(v,j)$ forms the sum of $n$ geometric series:
$$ \sum_{t=0}^\infty \sum_{l=1}^{n-1} (\lambda_l^*)^t
d_j^{1/2}\phi_l(j)( d_u^{-1/2}\phi_l(u)-d_v^{-1/2}\phi_l(v)).$$
Note each geometric series converges since the common ratio
$\lambda_l^*\in (-1,1)$. Thus,

\begin{align*}
\lim_{k\to\infty}\left(He^{\{k\}}_\alpha(u,j)-
He^{\{k\}}_\alpha(v,j)\right) &=  \sum_{t=0}^\infty \sum_{l=1}^{n-1} (\lambda_l^*)^t
d_j^{1/2}\phi_l(j)( d_u^{-1/2}\phi_l(u)-d_v^{-1/2}\phi_l(v))\\
&= \sum_{l=1}^{n-1} d_j^{1/2}\phi_l(j)(
d_u^{-1/2}\phi_l(u)-d_v^{-1/2}\phi_l(v))
\sum_{t=0}^\infty(\lambda_l^*)^t\\
&= \sum_{l=1}^{n-1} \frac{1}{1-\lambda_l^*}d_j^{1/2}\phi_l(j)(
d_u^{-1/2}\phi_l(u)-d_v^{-1/2}\phi_l(v))\\
&=\frac{1}{1-\alpha}\sum_{l=1}^{n-1} \frac{1}{\lambda_l}d_j^{1/2}\phi_l(j)(
d_u^{-1/2}\phi_l(u)-d_v^{-1/2}\phi_l(v))\\
&=\frac{1}{1-\alpha} ({\mathbb G}(u,j)-\mathbb{G}(v,j)).
\end{align*}
We have
$$\lim_{k\to\infty}
He_\alpha^{\{k\}}(u)-He_\alpha^{\{k\}}(v)=\frac{1}{1-\alpha}
({\bf 1}_u-{\bf 1}_v) \mathbb{G}.$$

\end{proof}

\begin{remark}\label{r1}
Observe that the convergence rate of $He_\alpha^{\{k\}}(u)-He_\alpha^{\{k\}}(v)$ 
is determined by 
 $\bar \lambda^*:=\max\{1-(1-\alpha)\lambda_1,
(1-\alpha)\lambda_{n-1}-1)$. 
It is critical that we assume $\alpha\ne 0$. 
When $\alpha=0$ then $\bar \lambda^*<1$ holds only if
$\lambda_{n-1}<2$, i.e. $G$ is a non-bipartite graph (see \cite{DSD}).

When $\lambda_1+\lambda_{n-1}>2$, 
$\bar \lambda^*$ (as a function of $\alpha$) achieves 
the minimum value
$\frac{\lambda_{n-1}-\lambda_1}{\lambda_{n-1}+\lambda_1}$
at $\alpha=1-\frac{2}{\lambda_1+\lambda_{n-1}}$.
This is the best mixing rate that the $\alpha$-lazy random walk on $G$
can achieve. Using the $\alpha$-lazy random walks (with
$\alpha=1-\frac{2}{\lambda_1+\lambda_{n-1}}$) to approximate the DSD
$L_q$-distance will be faster than using regular random walks.
  \end{remark}
Equation \eqref{eq:5} implies $\|\mathbb{G}\|_2\leq
\frac{1}{\lambda_1}\sqrt{\frac{\Delta}{\delta}}$. Combining with Theorem
\ref{t1}, we have
  \begin{corollary}
    For any connected simple graph $G$, and any two vertices $u$ and
    $v$, we have
$DSD_2(u,v)\leq \frac{\sqrt{2}}{\lambda_1}\sqrt{\frac{\Delta}{\delta}}$.
  \end{corollary}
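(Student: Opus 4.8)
The plan is to combine Theorem~\ref{t1} (in the form of the $\alpha\to 0$ definition $DSD_2(u,v)=\|({\bf 1}_u-{\bf 1}_v){\mathbb G}\|_2$) with submultiplicativity of the operator $2$-norm and a spectral bound on $\mathbb G$. First I would write
$$DSD_2(u,v)=\|({\bf 1}_u-{\bf 1}_v){\mathbb G}\|_2\le \|{\bf 1}_u-{\bf 1}_v\|_2\,\|{\mathbb G}\|_2,$$
where the first factor on the right is the Euclidean norm of the row vector ${\bf 1}_u-{\bf 1}_v$ and the second is the spectral norm of $\mathbb G$. Since $u\ne v$, the vector ${\bf 1}_u-{\bf 1}_v$ has exactly two nonzero entries, $+1$ and $-1$, so $\|{\bf 1}_u-{\bf 1}_v\|_2=\sqrt 2$, and it remains to establish $\|{\mathbb G}\|_2\le \frac{1}{\lambda_1}\sqrt{\Delta/\delta}$.

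For that I would use the relation ${\cal G}=D^{1/2}{\mathbb G}D^{-1/2}$ from Section~2, i.e. ${\mathbb G}=D^{-1/2}{\cal G}D^{1/2}$. The normalized Green's function ${\cal G}=O\Lambda^{\{-1\}}O'$ is symmetric with eigenvalues $0,\lambda_1^{-1},\dots,\lambda_{n-1}^{-1}$, and since $0<\lambda_1\le\cdots\le\lambda_{n-1}$ its spectral norm is $\|{\cal G}\|_2=\lambda_1^{-1}$. Then submultiplicativity gives
$$\|{\mathbb G}\|_2=\|D^{-1/2}{\cal G}D^{1/2}\|_2\le\|D^{-1/2}\|_2\,\|{\cal G}\|_2\,\|D^{1/2}\|_2=\frac{1}{\sqrt\delta}\cdot\frac{1}{\lambda_1}\cdot\sqrt\Delta,$$
because $D^{\pm 1/2}$ is diagonal with entries $d_i^{\pm 1/2}$, so $\|D^{1/2}\|_2=\sqrt\Delta$ and $\|D^{-1/2}\|_2=1/\sqrt\delta$. (Equivalently, one can read this off Equation~\eqref{eq:5} directly, bounding $\sqrt{d_y/d_x}\le\sqrt{\Delta/\delta}$ and recognizing $\sum_l\lambda_l^{-1}\phi_l(x)\phi_l(y)$ as the entries of the symmetric matrix $\cal G$.) Combining the two bounds yields $DSD_2(u,v)\le\frac{\sqrt2}{\lambda_1}\sqrt{\Delta/\delta}$.

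There is no substantial obstacle; the one point requiring care is that $\mathbb G$ is not symmetric, so its spectral norm cannot be read off a spectral decomposition of $\mathbb G$ itself — this is handled by passing to the similar symmetric matrix $\cal G$, whose eigenvalues are explicitly $\lambda_l^{-1}$. One should also note the passage to $\alpha\to 0$ is harmless: by Theorem~\ref{t1}, $(1-\alpha)DSD^\alpha_q(u,v)=\|({\bf 1}_u-{\bf 1}_v){\mathbb G}\|_q$ is independent of $\alpha$, so the estimate applies to $DSD_2(u,v)$ as defined.
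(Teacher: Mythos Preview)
Your proposal is correct and follows essentially the same approach as the paper: the paper simply states that Equation~\eqref{eq:5} implies $\|\mathbb G\|_2\le\frac{1}{\lambda_1}\sqrt{\Delta/\delta}$ and then combines this with Theorem~\ref{t1}, which is exactly what you do, only you spell out the passage through ${\mathbb G}=D^{-1/2}{\cal G}D^{1/2}$ and submultiplicativity rather than leaving it implicit.
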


Note that for any connected graph $G$ with diameter $m$ (Lemma 1.9, \cite{fan4}) 
$$\lambda_1>\frac{1}{m\;\vol(G)}.$$
This implies a uniform bound for the DSD $L_2$ distances on any
connected graph $G$ on $n$ vertices.
$$DSD_2(u,v)\leq \sqrt{\frac{2\Delta}{\delta}}m\;
\vol(G)<\sqrt{2}n^{3.5}.$$
This is a very coarse upper bound. But it does raise an interesting
question ``How large can the DSD $L_q$-distance be?''

\section{Some examples of the DSD distance}
In this section, we use Green's function to compute the DSD
$L_q$-distance (between two vertices of the distance reaching the diameter)
 for
paths,
cycles, and hypercubes.
\subsection{The path $P_n$}
We label the vertices of $P_n$ as $1,2, \dots, n$, in sequential order. 
Chung and Yau computed the Green's function $\cal G$ of the weighed path
with no boundary (Theorem 9, \cite{CY}). 
It implies that Green's
function of the path $P_n$ is given by:
for any $u\leq v$,
\begin{align*}
\mathcal{G}(u,v) &= \frac{\sqrt{d_ud_v}}{4(n-1)^2}  \bigg(\sum_{z<u} (d_1+\ldots+d_z)^2
+ \sum_{v \le z} (d_{z+1}+ \dots +  d_n)^2 \\
&\hspace*{3mm}
- \sum_{u \le z <v} (d_1 + \dots + d_z)(d_{z+1}+ \dots +  d_n)    \bigg) \\
&=  \frac{\sqrt{d_ud_v}}{4(n-1)^2} \left( \sum_{z=1}^{u-1} (2z-1)^2  
+\sum_{z=v}^{n-1} (2n-2z-1)^2 - \sum_{z=u}^{v-1} (2z-1)(2n-2z-1)
\right)\\
&=  \frac{\sqrt{d_ud_v}}{4(n-1)^2} \bigg(\sum_{z=1}^{n-1} (2z-1)^2  
+\sum_{z=v}^{n-1} (2n-2)(2n-4z) 
- \sum_{z=u}^{v-1} (2z-1)(2n-2)\bigg)\\
&=  \frac{\sqrt{d_ud_v}(2n-1)(2n-3)}{12(n-1)} 
+\frac{\sqrt{d_ud_v}}{2(n-1)} \bigg( \sum_{z=v}^{n-1}(2n-4z) 
- \sum_{z=u}^{v-1} (2z-1)\bigg)\\
&= \frac{\sqrt{d_ud_v}}{2(n-1)}\left((u-1)^2+(n-v)^2-\frac{2n^2-4n+3}{6}\right).
\end{align*}
When $u>v$, we have
$$\mathcal{G}(u,v)={\mathcal G}(v,u)=\frac{\sqrt{d_ud_v}}{2(n-1)}\left((v-1)^2+(n-u)^2-\frac{2n^2-4n+3}{6}\right).$$
Applying
$\mathbb{G}(u,v)=\frac{\sqrt{d_v}}{\sqrt{d_u}}\mathcal{G}(u,v)$,
we get
$$\mathbb{G}(u,v)=
\begin{cases}
  \frac{d_v}{2(n-1)}\left((u-1)^2+(n-v)^2-\frac{2n^2-4n+3}{6}\right)
& \mbox{ if } u\leq v;\\
  \frac{d_v}{2(n-1)}\left((v-1)^2+(n-u)^2-\frac{2n^2-4n+3}{6}\right)
& \mbox{ if } u> v.
\end{cases}
$$
We have
\begin{align*}
  \mathbb{G}(1,1)&=\frac{4n^2-8n+3}{12(n-1)};\\
 \mathbb{G}(1,j)&=\frac{1}{n-1}\left((n-j)^2-\frac{2n^2-4n+3}{6}\right) \quad
\mbox{ for } 2\leq j\leq n-1;\\
  \mathbb{G}(1,n)&=-\frac{2n^2-4n+3}{12(n-1)};\\
  \mathbb{G}(n,1)&=-\frac{2n^2-4n+3}{12(n-1)};\\
 \mathbb{G}(n,j)&=\frac{1}{n-1}\left((j-1)^2-\frac{2n^2-4n+3}{6}\right) \quad
\mbox{ for } 2\leq j\leq n-1;\\
  \mathbb{G}(n,n)&=\frac{4n^2-8n+3}{12(n-1)}.
\end{align*}

Thus, 
\begin{equation}
  \mathbb{G}(1,j)-\mathbb{G}(n,j)=
  \begin{cases}
\frac{n-1}{2} & \mbox{ if }j=1;\\
n+1-2j & \mbox{ if }2\leq j\leq n-1;\\
-\frac{n-1}{2} & \mbox { if }j=n.
  \end{cases}
\end{equation}

\begin{theorem}
For any $q\geq 1$, the DSD $L_q$-distance of the Path $P_n$ between $1$ and $n$
satisfies
$$DSD_q(1,n)=(1+q)^{-1/q} n^{1+1/q} +O(n^{1/q}).$$
\end{theorem}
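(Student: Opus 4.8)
The plan is to feed the explicit entries of $\mathbb{G}$ computed above into the closed formula $DSD_q(u,v)=\|({\bf 1}_u-{\bf 1}_v){\mathbb G}\|_q$. Since the $j$-th coordinate of $({\bf 1}_1-{\bf 1}_n){\mathbb G}$ is exactly $\mathbb{G}(1,j)-\mathbb{G}(n,j)$, the displayed case analysis gives
\begin{equation*}
DSD_q(1,n)^q = 2\left(\frac{n-1}{2}\right)^{q} + \sum_{j=2}^{n-1}\left|n+1-2j\right|^{q}.
\end{equation*}
The two endpoint coordinates contribute $2^{1-q}(n-1)^q=O(n^q)$, so the whole problem reduces to an asymptotic evaluation of the middle sum.

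For that sum I would reindex. As $j$ runs through $2,\dots,n-1$ the number $n+1-2j$ takes each value of the arithmetic progression $n-3,\ n-5,\ \dots,\ -(n-3)$ exactly once; this progression is symmetric about $0$, so grouping $m$ with $-m$ yields
\begin{equation*}
\sum_{j=2}^{n-1}\left|n+1-2j\right|^{q} = 2\sum_{\substack{1\le m\le n-3\\ m\equiv n+1\ (2)}} m^{q},
\end{equation*}
the value $m=0$ (occurring when $n$ is odd) contributing nothing. Next I would use the elementary estimate, valid for each fixed $q$,
\begin{equation*}
\sum_{\substack{1\le m\le M\\ m\equiv\varepsilon\ (2)}} m^{q} = \frac{M^{q+1}}{2(q+1)} + O(M^{q}),
\end{equation*}
which follows by writing the restricted sum in terms of $\sum_{k\le M/2}k^{q}$ and comparing with $\int x^{q}\,dx$ (or by Faulhaber's formula). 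Taking $M=n-3$, absorbing the $O(n^q)$ endpoint contribution, and using $(n-3)^{q+1}=n^{q+1}+O(n^q)$, this gives
\begin{equation*}
DSD_q(1,n)^q = \frac{n^{q+1}}{q+1} + O(n^{q}) = \frac{n^{q+1}}{q+1}\left(1+O(n^{-1})\right).
\end{equation*}
Extracting the $q$-th root and using $(1+x)^{1/q}=1+O(x)$ as $x\to0$, one lands on
\begin{equation*}
DSD_q(1,n) = (1+q)^{-1/q}n^{1+1/q}\left(1+O(n^{-1})\right) = (1+q)^{-1/q}n^{1+1/q}+O(n^{1/q}),
\end{equation*}
since $n^{1+1/q}\cdot O(n^{-1})=O(n^{1/q})$.

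I do not expect a genuine obstacle here; the argument is bookkeeping once Theorem~\ref{t1} and the Green's-function entries are in hand. The point that most needs care is the error propagation: the $O(n^q)$ sitting inside the $q$-th power --- coming both from the two endpoint coordinates and from the remainder in the power-sum estimate --- must be correctly divided by the leading term $n^{q+1}/(q+1)$ before the root is taken, so that it emerges as $O(n^{1/q})$ rather than something larger; and all implied constants are permitted to depend on $q$, which is held fixed throughout.
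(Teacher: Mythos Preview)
Your proof is correct and follows essentially the same route as the paper: both start from the explicit differences $\mathbb{G}(1,j)-\mathbb{G}(n,j)$, arrive at $DSD_q(1,n)^q=2\bigl(\tfrac{n-1}{2}\bigr)^q+\sum_{j=2}^{n-1}|n+1-2j|^q$, estimate this as $\tfrac{1}{1+q}n^{1+q}+O(n^q)$, and extract the $q$-th root. The paper's version simply skips the reindexing and power-sum justification that you have (rightly) written out in full.
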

\begin{proof}
  \begin{align*}
    DSD_q(1,n)&= \left(
      2\left(\frac{n-1}{2}\right)^q+\sum_{j=2}^{n-1}|n+1-2j|^q\right)^{1/q}\\
&= \left(\frac{1}{1+q}n^{1+q}+O(n^q)\right)^{1/q}\\
&=(1+q)^{-1/q} n^{1+1/q} +O(n^{1/q}).
  \end{align*}
\end{proof}

For $q=1$, we have the following exact result:
\begin{align*}
DSD_1(1,n)&=\sum_{j=1}^{n}|\mathbb{G}(1,j)-\mathbb{G}(n,j)|\\
&=
\begin{cases}
  2k^2-2k+1 & \mbox{ if }n=2k\\
  2k^2 &\mbox{ if }n=2k+1.
\end{cases}
\end{align*}


\subsection{The cycle $C_n$}

Now we consider Green's function of cycle $C_n$.
For $x,y\in \{1,2,\ldots, n\}$, let $|x-y|_c$ be the graph distance of
$x,y$ in $C_n$. We have the following Lemma.

\begin{lemma}\label{lcn}
For even $n=2k$,  Green's function ${\mathbb G}$ of $C_n$ is given by
 $${\mathbb
  G}(x,y)=\frac{1}{2k}(k-|x-y|_c)^2-\frac{k}{6}-\frac{1}{12k}.$$
 For odd $n=2k+1$,   Green's function ${\mathbb G}$ of $C_n$ is given by
$${\mathbb G}(x,y)=\frac{2}{2k+1}{k+1-|x-y|_c\choose 2}-\frac{k^2+k}{3(2k+1)}.$$
\end{lemma}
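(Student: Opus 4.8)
The plan is to compute Green's function of $C_n$ directly from the eigen-decomposition via Equation~\eqref{eq:5}, using the fact that $C_n$ is vertex-transitive and regular, so that $d_x=d_y=2$ and $\mathbb{G}(x,y)=\mathcal{G}(x,y)$ depends only on $|x-y|_c$. Concretely, I would first recall that the normalized Laplacian eigenvalues of $C_n$ are $\lambda_\ell = 1-\cos(2\pi\ell/n)$ for $\ell=0,1,\ldots,n-1$, with orthonormal eigenvectors given by the discrete Fourier basis; since eigenvalues come in pairs, one gets real eigenvectors $\phi_\ell(x)\propto\cos(2\pi\ell x/n)$ and $\sin(2\pi\ell x/n)$. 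Substituting into~\eqref{eq:5} and using the product-to-sum identity $\cos a\cos b+\sin a\sin b=\cos(a-b)$, the cross terms collapse and I obtain
$$\mathbb{G}(x,y)=\frac{1}{n}\sum_{\ell=1}^{n-1}\frac{\cos(2\pi\ell\, m/n)}{1-\cos(2\pi\ell/n)},$$
where $m=|x-y|_c$. So the whole statement reduces to evaluating this trigonometric sum in closed form.

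The heart of the proof is therefore the identity
$$\sum_{\ell=1}^{n-1}\frac{\cos(2\pi\ell m/n)}{1-\cos(2\pi\ell/n)}=\begin{cases}\tfrac{n^2}{2}\bigl(\tfrac{m}{n}-\tfrac{m^2}{n^2}\bigr)^{\!*}&\text{suitably interpreted}\end{cases}$$
— more usefully, I would prove that $\sum_{\ell=1}^{n-1}\frac{\cos(2\pi\ell m/n)}{1-\cos(2\pi\ell/n)}$ equals a quadratic polynomial in $m$ on the range $0\le m\le n$, matching the two formulas in the statement. There are a few standard ways to do this. The cleanest is probably to recognize that $\mathbb{G}$ is, up to the rank-one correction $-\phi_0'\phi_0$, the pseudo-inverse of $\mathcal{L}=I-D^{-1/2}AD^{-1/2}$, and for $C_n$ this is $\tfrac12$ times the pseudo-inverse of the combinatorial Laplacian; so it suffices to exhibit a function $F(m)$ (the claimed quadratic/binomial expression) that is circulant, has row sums zero, and satisfies $L F = \text{(identity)} - \text{(constant)}$, i.e. the defining equations~\eqref{eq:g1}--\eqref{eq:g2} of $\mathbb{G}$ — and then invoke uniqueness of Green's function. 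Checking $LF=I-\tfrac{1}{n}J$ amounts to computing the discrete second difference $F(m+1)-2F(m)+F(m-1)$ of a quadratic, which is a constant, plus handling the wrap-around boundary at $m=0$ (where the ``distance'' $|x-y|_c$ is not smooth) — this boundary/corner term is exactly what produces the $+1$ on the diagonal. I would verify the row-sum-zero condition $\sum_y F(|x-y|_c)=0$ separately, using the standard sums $\sum_{j} j$ and $\sum_j j^2$, which pins down the additive constants $-\tfrac{k}{6}-\tfrac{1}{12k}$ (even case) and $-\tfrac{k^2+k}{3(2k+1)}$ (odd case).

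I would treat the even and odd cases in parallel. For $n=2k$, set $F(m)=\tfrac{1}{2k}(k-m)^2 - c_n$ for $0\le m\le k$ and extended evenly/periodically by $F(m)=F(|m|_c)$; for $n=2k+1$, set $F(m)=\tfrac{2}{2k+1}\binom{k+1-m}{2}-c_n'$. In each case: (i) compute $F(m+1)-2F(m)+F(m-1)$ for $m$ away from the break point and check it equals $-\tfrac{1}{n}$ (the off-diagonal value of $I-\tfrac1n J$ since the graph is $2$-regular so $L(x,y)=-\tfrac12$ for neighbors and the second difference of $F$ along the cycle corresponds to $2\cdot L F$ up to sign — I'd be careful with this factor of $2$); (ii) at $m=0$, the second difference picks up an extra term because $F$ achieves its max there, and this yields the value $1-\tfrac1n$ required on the diagonal; (iii) enforce $\sum_{m} (\text{multiplicity of }m)\,F(m)=0$ to solve for $c_n$, $c_n'$. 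The main obstacle, and the only place real care is needed, is bookkeeping the factor-of-two and sign conventions relating the normalized Laplacian $\mathcal{L}$, the combinatorial Laplacian, and the "distance-squared" ansatz — together with correctly accounting for the corner at $m=0$ where $|x-y|_c$ fails to be a smooth quadratic, which is precisely the source of the identity matrix term in~\eqref{eq:g1}. Once the ansatz is shown to satisfy~\eqref{eq:g1}--\eqref{eq:g2}, uniqueness of Green's function finishes the proof; alternatively, a direct Fourier evaluation of $\sum_{\ell=1}^{n-1}\frac{\cos(2\pi\ell m/n)}{1-\cos(2\pi\ell/n)}$ via the known generating function $\sum_{\ell=1}^{n-1}\frac{z^\ell}{1-\cos(2\pi\ell/n)}$ or via two summations-by-parts gives the same closed form.
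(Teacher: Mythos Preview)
Your proposal is correct, and the approach you ultimately settle on --- exhibit the claimed formula as a circulant function of $|x-y|_c$, check that it satisfies the defining equations~\eqref{eq:g1}--\eqref{eq:g2} via a discrete second-difference computation (with the corner case at $m=0$ producing the diagonal entry), and fix the additive constant by the row-sum-zero condition --- is exactly what the paper does. The paper simply writes down $\mathbb{G}(x,y)-\tfrac12\mathbb{G}(x,y-1)-\tfrac12\mathbb{G}(x,y+1)$ and verifies it equals $-\tfrac1n$ off the diagonal and $1-\tfrac1n$ on it, then checks $\sum_i i^2$-type identities for~\eqref{eq:g2}; your Fourier detour to the sum $\tfrac1n\sum_{\ell=1}^{n-1}\frac{\cos(2\pi\ell m/n)}{1-\cos(2\pi\ell/n)}$ is an unnecessary (though correct) framing before you arrive at the same direct verification.
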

\begin{proof}
 We only prove the even case here. The odd case is similar and will
 be left to the readers.

For $n=2k$, it suffices to verify that
${\mathbb G}$ satisfies Equations \eqref{eq:g1} and \eqref{eq:g2}.
To verify Equation \eqref{eq:g1}, we need show
$$ {\mathbb G}(x,y)-\frac{1}{2}{\mathbb G}(x,y-1)-\frac{1}{2}{\mathbb G}(x,y+1)=
\begin{cases}
-\frac{1}{n} \mbox{ if } x\ne y;\\
1-\frac{1}{n} \mbox{ if } x= y.\\  
\end{cases}
$$
Let $z=\frac{k}{6}+\frac{1}{12k}$ and $i=|x-y|_c$.
For $x\ne y$,  we have
\begin{align*}
&\hspace*{-1cm}   {\mathbb G}(x,y)-\frac{1}{2}{\mathbb G}(x,y-1)-\frac{1}{2}{\mathbb
    G}(x,y+1)\\
&=(\frac{1}{2k}(k-i)^2-z) -\frac{1}{2}(\frac{1}{2k}(k-i-1)^2-z)
-\frac{1}{2}(\frac{1}{2k}(k-i+1)^2-z)\\
&=-\frac{1}{2k}\\
&=-\frac{1}{n}.
\end{align*}
When $x=y$, we have 
\begin{align*}
&\hspace*{-1cm}  {\mathbb G}(x,y)-\frac{1}{2}{\mathbb G}(x,y-1)-\frac{1}{2}{\mathbb
    G}(x,y+1)\\
&=\frac{1}{2k}k^2-z -\frac{1}{2}\left(\frac{1}{2k}(k-1)^2-z\right)
-\frac{1}{2}\left(\frac{1}{2k}(k-1)^2-z\right)\\
&=\frac{2k-1}{2k}\\
&=1-\frac{1}{n}.
\end{align*}
To verify Equation \eqref{eq:g2}, it is enough to verify
$$1^2+2^2+\cdots + (k-1)^2+k^2+(k-1)^2+\cdots+1^2=\frac{2k^3+k}{3}=n^2z.$$
This can be done by induction on $k$.
 \end{proof}

\begin{theorem}
For any $q\geq 1$, the DSD $L_q$-distance of the Cycle $C_n$ between
$1$ and $\lfloor \frac{n}{2}\rfloor +1$
satisfies
$$DSD_q(1,\lfloor \frac{n}{2}\rfloor +1)=\left(\frac{4}{1+q}\right)^{1/q} \left(\frac{n}{4}\right)^{1+1/q} +O(n^{1/q}).$$
\end{theorem}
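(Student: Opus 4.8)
The plan is to compute $DSD_q(1, \lfloor n/2\rfloor + 1)$ directly from the formula in Theorem~\ref{t1}, namely $DSD_q(u,v) = \|({\bf 1}_u - {\bf 1}_v)\mathbb{G}\|_q$, using the explicit expression for $\mathbb{G}$ on $C_n$ given in Lemma~\ref{lcn}. Since all degrees in $C_n$ equal $2$, the normalized and unnormalized Green's functions agree and the formula is symmetric in a clean way. The key observation is that for fixed $x$, the vector $({\bf 1}_x - {\bf 1}_y)\mathbb{G}$ has $j$-th coordinate $\mathbb{G}(x,j) - \mathbb{G}(y,j)$, and by Lemma~\ref{lcn} this depends only on the cycle-distances $|x-j|_c$ and $|y-j|_c$. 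Taking $x = 1$ and $y = \lfloor n/2\rfloor + 1$ places the two reference points diametrically opposite, so the additive constants ($-k/6 - 1/(12k)$ in the even case, $-(k^2+k)/(3(2k+1))$ in the odd case) cancel in the difference, leaving only the quadratic (resp. binomial) terms.

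First I would treat the even case $n = 2k$. There $\mathbb{G}(1,j) - \mathbb{G}(k+1, j) = \frac{1}{2k}\big((k - |1-j|_c)^2 - (k - |{(k+1)} - j|_c)^2\big)$. As $j$ ranges over the cycle, the pair $(|1-j|_c, |{(k+1)}-j|_c)$ runs through $(a, k-a)$ for $a = 0, 1, \ldots, k$, each value of $a$ in $\{1,\dots,k-1\}$ occurring twice and $a \in \{0, k\}$ once. So the difference at such a vertex is $\frac{1}{2k}\big((k-a)^2 - a^2\big) = \frac{1}{2k}(k)(k - 2a) = \frac{k-2a}{2}$. Hence
\begin{align*}
DSD_q(1, k+1)^q &= 2\sum_{a=0}^{k} \left|\frac{k - 2a}{2}\right|^q + O(\text{boundary corrections})\\
&= 2^{1-q}\sum_{a=0}^{k} |k - 2a|^q + O(k^q)\\
&= 2^{1-q} \cdot \frac{2}{q+1} k^{q+1} + O(k^q),
\end{align*}
where the last line uses that $\sum_{a=0}^{k} |k - 2a|^q$ is a Riemann sum approximating $\int_{-k}^{k} |t|^q\, dt \cdot \tfrac12 \cdot 2 = \frac{2}{q+1}k^{q+1}$ (more carefully, $\sum_{a} |k-2a|^q = 2^q \sum_{a}|k/2 - a|^q \sim 2^q \cdot 2 \int_0^{k/2} s^q ds = \frac{2^{q+1}}{q+1}(k/2)^{q+1}$, which gives the same leading term). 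Taking $q$-th roots and substituting $k = n/2$ yields $DSD_q = \big(\frac{4}{q+1}\big)^{1/q}(n/4)^{1+1/q} + O(n^{1/q})$, matching the claim.

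Then I would handle the odd case $n = 2k+1$ analogously, using $\mathbb{G}(x,y) = \frac{2}{2k+1}\binom{k+1 - |x-y|_c}{2} - \frac{k^2+k}{3(2k+1)}$ with $y = k+1$; the binomial $\binom{m}{2} = \frac{m(m-1)}{2}$ is again a quadratic in $|x-y|_c$, the distance pairs now run through $(a, k-a)$ and $(a, k+1-a)$ appropriately, the constants cancel, and the resulting sum has the same leading-order behavior since the difference of two such binomials is again linear in $a$ with slope $\Theta(1)$. I would then note that both parities give the same asymptotic formula, so the $O(n^{1/q})$ error absorbs the parity-dependent lower-order terms. The main obstacle — really the only delicate point — is the careful bookkeeping of which cycle-distance pairs arise and with what multiplicity (the endpoints $a=0$ and $a=k$, and in the odd case the slight asymmetry between the "near" and "far" arcs), together with confirming that the power-sum $\sum |k-2a|^q$ has leading term $\frac{2}{q+1}k^{q+1}$; both are routine but must be done correctly to pin down the constant $(4/(q+1))^{1/q}$. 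As with the path, for $q=1$ one could extract an exact closed form, but the theorem as stated only needs the leading asymptotics.
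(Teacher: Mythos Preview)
Your approach is essentially identical to the paper's: both compute $\mathbb{G}(1,j)-\mathbb{G}(k+1,j)=\tfrac{k}{2}-i$ for $i=|1-j|_c$ in the even case, sum $|\tfrac{k}{2}-i|^q$ with the appropriate multiplicities, and extract the leading term; the paper likewise omits the odd case as analogous. One arithmetic slip: in your displayed chain you assert $\sum_{a=0}^{k}|k-2a|^q\sim\frac{2}{q+1}k^{q+1}$, but the sample spacing is $2$, so the correct leading term is $\frac{1}{q+1}k^{q+1}$ (your parenthetical ``more careful'' computation gets this right, giving $\frac{2^{q+1}}{q+1}(k/2)^{q+1}=\frac{k^{q+1}}{q+1}$, which does \emph{not} match the line above it). With that correction your computation gives $DSD_q^q=2^{1-q}\cdot\frac{1}{q+1}k^{q+1}=\frac{4}{q+1}(k/2)^{q+1}$ and the stated constant follows.
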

\begin{proof}
We only verify the case of even cycle here. The odd cycle is similar
and will be omitted.

For $n=2k$, the difference of ${\mathbb G}(1,j)$ and ${\mathbb
  G}(1+k,j)$ have a simple form:
$${\mathbb G}(1,j)-{\mathbb
  G}(1+k,j)=\frac{1}{2k}((k-i)^2-i^2)=\frac{k}{2}-i,$$
where $i=|j-1|_c$. Thus,
\begin{align*}
    DSD_q(1,1+k)&= \left(2\sum_{i=0}^{k-1}\left|\frac{k}{2}-i\right|^q\right)^{1/q}\\
&= \left(\frac{4}{1+q}\left(\frac{k}{2}\right)^{1+q}+O(k^q)\right)^{1/q}\\
&=\left(\frac{4}{1+q}\right)^{1/q} \left(\frac{n}{4}\right)^{1+1/q} +O(n^{1/q}).
  \end{align*}
\end{proof}

\subsection{The hypercube $Q_n$}
Now we consider the hypercube $Q_n$, whose vertices are the binary
strings of length $n$ and whose edges are pairs of vertices differing
only at one coordinate. Chung and Yau \cite{CY} computed the Green's
function of $Q_n$: for any two vertices $x$ and $y$ with distance $k$
in $Q_n$, 
\begin{align*}
{\mathbb G}(x,y)\!&=\!2^{-2n} \!\! \left(\!-\!\sum_{ j < k} 
\frac{({n \choose 0} + \dots + {n \choose j} )
({n \choose j+1} + \dots + {n \choose n})}{{n-1\choose j}}
\!+\!\sum_{k \leq j}
 \frac{({n \choose j+1} + \dots + {n \choose n})^2}{{n-1 \choose
     j}}\!\!\right)\\
&=2^{-2n}\sum_{j=0}^n
 \frac{({n \choose j+1} + \dots + {n \choose n})^2}{{n-1 \choose
     j}} -2^{-n} \sum_{ j < k} 
\frac{{n \choose j+1} + \dots + {n \choose n}}{{n-1\choose j}}.
\end{align*}

We are interested in the DSD distance between a pair of antipodal
vertices. Let $\bf 0$ denote the all-$0$-string and  $\bf 1$ denote
the all-$1$-string. For any vertex $x$, if the distance between $\bf
0$ and $x$ is $i$ then the distance between $\bf
1$ and $x$ is $n-i$.
We have
\begin{align}
\nonumber
{\mathbb G}({\bf 0}, x)-{\mathbb G}({\bf 1}, x) &=
-2^{-n} \sum_{ j < k} 
\frac{{n \choose j+1} + \dots + {n \choose n}}{{n-1\choose j}}
+ 2^{-n} \sum_{ j < n-k} 
\frac{{n \choose j+1} + \dots + {n \choose n}}{{n-1\choose j}}\\
&=2^{-n}\sum_{j=k}^{n-k-1} 
\frac{{n \choose j+1} + \dots + {n \choose n}}{{n-1\choose j}}.
\label{eq:8}
\end{align}
Here we use the convention that $\sum_{j=b}^ac_j=-\sum_{j=a}^bc_j$ for $b>a$.
\begin{theorem}
For any $q\geq 1$, the DSD $L_q$-distance of the hypercube $Q_n$ between
${\bf 0}$ and $\bf 1$
satisfies
\begin{equation}
  \label{eq:9}
  DSD_q({\bf 0},{\bf 1})=\left(\sum_{k=0}^{n} {n\choose k} 
\left|2^{-n}\sum_{j=k}^{n-k-1} 
\frac{{n \choose j+1} + \dots + {n \choose n}}{{n-1\choose j}}\right|^q
\right)^{1/q}.
\end{equation}
In particular, $DSD_q({\bf 0},{\bf 1})=\Theta(1)$ when $q>1$ while $DSD_1({\bf 0},{\bf 1})=\Omega(n)$.
\end{theorem}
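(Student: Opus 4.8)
The formula \eqref{eq:9} itself is immediate from the discussion preceding the statement: Theorem~\ref{t1} (with $\alpha\to 0$) gives $DSD_q({\bf 0},{\bf 1})=\|({\bf 1}_{\bf 0}-{\bf 1}_{\bf 1}){\mathbb G}\|_q$, and since $Q_n$ has exactly $\binom nk$ vertices $x$ at Hamming distance $k$ from ${\bf 0}$, each contributing the value \eqref{eq:8}, the displayed expression follows. So the content is the two asymptotic claims, and the first move is to put the inner quantity in usable form. Write $\gamma_k$ for the common value of ${\mathbb G}({\bf 0},x)-{\mathbb G}({\bf 1},x)$ over vertices $x$ with $d({\bf 0},x)=k$, and abbreviate $T_j=\binom n{j+1}+\cdots+\binom nn$. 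Using the two symmetries $T_j+T_{n-1-j}=\sum_{i=0}^n\binom ni=2^n$ and $\binom{n-1}{j}=\binom{n-1}{n-1-j}$, I would pair the summation index $j$ in \eqref{eq:8} with $n-1-j$; the $T_j$'s cancel and one gets, for $0\le k\le n/2$,
$$\gamma_k=\frac12\sum_{j=k}^{\,n-1-k}\frac1{\binom{n-1}{j}}\ \ge\ 0,$$
together with $\gamma_{n-k}=-\gamma_k$ (seen directly, or from the automorphism $x\mapsto\bar x$ of $Q_n$ that interchanges ${\bf 0}$ and ${\bf 1}$). Consequently $DSD_q({\bf 0},{\bf 1})^q=2\sum_{0\le k<n/2}\binom nk\gamma_k^q$, up to one vanishing middle term when $n$ is even.

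For the lower bounds I keep only the two extreme summands $j=k$ and $j=n-1-k$ in the formula for $\gamma_k$: this gives $\gamma_k\ge\binom{n-1}{k}^{-1}$ for all $k\le n/2$, hence $\binom nk\gamma_k\ge\binom nk/\binom{n-1}{k}=n/(n-k)\ge 1$, and summing over $0\le k\le n/4$ already yields $DSD_1({\bf 0},{\bf 1})\ge n/4=\Omega(n)$. The same estimate gives, for every $q\ge 1$, $DSD_q({\bf 0},{\bf 1})\ge\gamma_0\ge 1$ (the $j=0$ and $j=n-1$ terms), so the remaining task is the matching upper bound $DSD_q({\bf 0},{\bf 1})=O(1)$ for $q>1$.

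For this I would bound $\sum_{0\le k<n/2}\binom nk\gamma_k^q$ by splitting the range of $k$ into three parts. First, $k=0$: here $\gamma_0=\tfrac12\sum_{j=0}^{n-1}\binom{n-1}{j}^{-1}\le\tfrac32$, an $O(1)$ term. Second, $\epsilon_0 n\le k<n/2$ for a small fixed constant $\epsilon_0>0$: using only the crude bound $\gamma_k\le\tfrac12(n-2k)\binom{n-1}{k}^{-1}\le\tfrac n2\binom{n-1}{k}^{-1}$ one gets $\binom nk\gamma_k^q\le(n/2)^q\tfrac n{n-k}\binom{n-1}{k}^{-(q-1)}\le n^{q+1}\binom{n-1}{\lceil\epsilon_0 n\rceil}^{-(q-1)}$, which is exponentially small because $\binom{n-1}{\lceil\epsilon_0 n\rceil}$ is exponentially large and $q-1>0$; there are only $O(n)$ such terms. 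Third, and crucially, $1\le k<\epsilon_0 n$: here the crude bound fails badly (for $k=1$ it already produces a term of order $n$). Instead I use that for $j\le n/4$ the consecutive ratio satisfies $\binom{n-1}{j+1}/\binom{n-1}{j}=(n-1-j)/(j+1)\ge 2$, so the first few summands decay geometrically and $\sum_{j=k}^{\lfloor n/4\rfloor}\binom{n-1}{j}^{-1}\le 2\binom{n-1}{k}^{-1}$; by the symmetry $j\leftrightarrow n-1-j$ the tail $j\ge\lceil3n/4\rceil$ contributes the same, while the central block $n/4<j<3n/4$ contributes at most $n\binom{n-1}{\lceil n/4\rceil}^{-1}$, which for $\epsilon_0$ small enough (so that $H(\epsilon_0)<H(1/4)$, $H$ the binary entropy) is itself at most $\binom{n-1}{k}^{-1}$. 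Hence $\gamma_k\le C_0\binom{n-1}{k}^{-1}$ for an absolute constant $C_0$, so $\binom nk\gamma_k^q\le 2C_0^q\binom{n-1}{k}^{-(q-1)}$, and $\sum_{k\ge 1}\binom{n-1}{k}^{-(q-1)}\to 0$ as $n\to\infty$ (it is dominated by its $k=1$ term $(n-1)^{-(q-1)}$). Adding the three contributions gives $DSD_q({\bf 0},{\bf 1})^q=O(1)$, which with the lower bound proves $DSD_q({\bf 0},{\bf 1})=\Theta(1)$ for $q>1$.

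The main obstacle is precisely this third range: one must show that $\gamma_k$ has the \emph{exact} order $\binom{n-1}{k}^{-1}$ for small $k$, i.e.\ that $\sum_{j=k}^{n-1-k}\binom{n-1}{j}^{-1}$ is only an absolute constant times its largest summand rather than (number of terms)$\,\times\,$(largest summand) --- the latter would overstate it by a factor $\Theta(n)$ and destroy the bound. The rest of the argument is routine: the binomial coefficients near $j\approx n/2$ are so large that both the middle of each inner sum and the entire regime $k=\Theta(n)$ become negligible as soon as $q>1$.
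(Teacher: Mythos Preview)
Your argument is correct and follows the same overall strategy as the paper's proof: reduce $|\gamma_k|$ to a sum $\sum_j\binom{n-1}{j}^{-1}$, then for $q>1$ show the contributions $\binom nk\gamma_k^q$ are summable, while for $q=1$ each is at least $1$.  Two tactical choices differ.  First, your pairing identity $T_j+T_{n-1-j}=2^n$ gives the exact closed form $\gamma_k=\tfrac12\sum_{j=k}^{n-1-k}\binom{n-1}{j}^{-1}$; the paper instead observes $2^{-n}T_j\in[1/2,1]$ for $k\le j\le n/2$ to show $a_k$ is comparable to $b_k:=\binom nk\bigl(\sum_{j=k}^{n/2}\binom{n-1}{j}^{-1}\bigr)^q$, which is slightly less clean but equivalent for the asymptotics.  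Second, for the upper bound the paper cuts at the \emph{constant} threshold $k_0=\lfloor(q+2)/(q-1)\rfloor$ rather than at $\epsilon_0 n$: for $k>k_0$ the crude estimate $b_k\le\binom nk\bigl((n/2)\binom{n-1}{k}^{-1}\bigr)^q=O(n^{(1-q)k+q})=O(n^{-2})$ already suffices, leaving only finitely many small $k$ to handle via the refinement $\sum_{j=k}^{n/2}\binom{n-1}{j}^{-1}\le\binom{n-1}{k}^{-1}+(n/2)\binom{n-1}{k+1}^{-1}$.  Your three-range split with geometric decay and an entropy comparison achieves the same end; the paper's split is a bit shorter because the cutoff is a constant, but your exact formula for $\gamma_k$ is the tidier starting point.
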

\begin{proof}
Equation \eqref{eq:9} follows from the definition of DSD
$L_q$-distance
and Equation \eqref{eq:8}. Let 
$$a_k={n\choose k} 
\left| 2^{-n}\sum_{j=k}^{n-k-1} 
\frac{{n \choose j+1} + \dots + {n \choose n}}{{n-1\choose
    j}}\right|^q.$$
Observe that $a_k=a_{n-k}$, we only need to estimate $a_k$ for $0\leq
k\leq n/2$. 
Also we can throw away the terms in the second summation for $j>n/2$
since that part is at most half of $a_k$.
For $k\leq j\leq  n/2$, 
$$\frac{1}{2}\leq 2^{-n}\left({n \choose j+1} + \dots + {n \choose n}\right)\leq 1.$$
Thus $a_k$ has the same magnitude as
$b_k:={n\choose k} 
\left(\sum_{j=k}^{n/2} 
\frac{1}{{n-1\choose
    j}}\right)^q.$

For $q>1$, we first bound $b_k$ by $b_k\leq  {n\choose k} 
\left(
\frac{n/2}{{n-1\choose
    k}}\right)^q=O(n^{(1-q)k+q}).$
When $k>\frac{q+2}{q-1}$, we have $b_k=O(n^{-2})$. The total
contribution of those $b_k$'s is $O(n^{-1})$, which is negligible. 
Now consider the term $b_k$ for $k=0,1,\ldots, \lfloor \frac{q+2}{q-1}
\rfloor$. We bound $b_k$ by
$$b_k\leq {n\choose k}\left(\frac{1}{{n-1\choose k}}+  \frac{n/2}{{n-1\choose
    k+1}}\right)^q=O(1).$$
This implies $DSD_q({\bf 0},{\bf 1})=O(1)$. The lower bound 
$DSD_q({\bf 0},{\bf 1}) \ge 1$ is obtained by taking the term at $k=0$.
Putting together, we have $DSD_q({\bf 0},{\bf 1})=\Theta(1)$ for
$q>1$.

For $q=1$, note that 
$$b_k=\sum_{j=k}^{n/2}\frac{{n\choose k}}{{n-1
    \choose j}} >\frac{{n\choose k}}{{n-1
    \choose k}}=\frac{n}{n-k}>1.$$
Thus, $DSD_1({\bf 0},{\bf 1})=\Omega(n)$.

\end{proof}

\section{Random graphs}
In this section, we will calculate  the 
DSD $L_q$-distance in two random graphs models. For random graphs, 
the non-zero Laplacian eigenvalues of a
graph $G$ are often concentrated around $1$.
The following Lemma is useful to the DSD $L_q$-distance.

\begin{lemma} \label{l1}
  Let $\lambda_1,\ldots, \lambda_{n-1}$ be all non-zero Laplacian
  eigenvalues of a graph $G$. Suppose there is a small number  $\epsilon \in
  (0,1/2)$, so that for $1\leq i\leq n-1$, $|1-\lambda_i|\leq
  \epsilon$.
Then for any pairs of vertices $u, v$, the DSD $L_q$-distance
satisfies
\begin{align}
  \label{eq:upperb1}
  |DSD_q(u,v)-2^{1/q}|\leq \frac{\epsilon}{1-\epsilon}
\sqrt{\frac{\Delta}{d_u}+\frac{\Delta}{d_v}} 
\quad \mbox{ if } q\geq 2,\\
\label{eq:upperb2}
 |DSD_q(u,v)-2^{1/q}|\leq n^{\frac{1}{q}-\frac{1}{2}}\frac{\epsilon}{1-\epsilon}
\sqrt{\frac{\Delta}{d_u}+\frac{\Delta}{d_v}} 
\quad \mbox{ for } 1\leq q < 2.
\end{align}
\end{lemma}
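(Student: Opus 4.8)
The plan is to compare the row vector $({\bf 1}_u-{\bf 1}_v){\mathbb G}$, whose $L_q$-norm is exactly $DSD_q(u,v)$ by Theorem \ref{t1}, against the plain indicator difference ${\bf 1}_u-{\bf 1}_v$, which has $\|{\bf 1}_u-{\bf 1}_v\|_q=2^{1/q}$, and to bound the discrepancy. Using Equation \eqref{eq:5}, the $j$-th coordinate of $({\bf 1}_u-{\bf 1}_v){\mathbb G}$ is
$${\mathbb G}(u,j)-{\mathbb G}(v,j)=\sqrt{d_j}\sum_{l=1}^{n-1}\frac{1}{\lambda_l}\phi_l(j)\left(\frac{\phi_l(u)}{\sqrt{d_u}}-\frac{\phi_l(v)}{\sqrt{d_v}}\right).$$
First I would record the ``ideal'' case in which every $\lambda_l$ is replaced by $1$: since $(\phi_0,\ldots,\phi_{n-1})$ is an orthonormal basis one has $\sum_{l=0}^{n-1}\phi_l(x)\phi_l(y)=I(x,y)$, and with $\phi_0(x)=\sqrt{d_x/\vol(G)}$ the $\phi_0$-terms cancel, so $\sqrt{d_j}\sum_{l\ge 1}\phi_l(j)(\phi_l(u)/\sqrt{d_u}-\phi_l(v)/\sqrt{d_v})$ equals $({\bf 1}_u-{\bf 1}_v)_j$ exactly. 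Writing $E_j$ for the residual,
$${\mathbb G}(u,j)-{\mathbb G}(v,j)=({\bf 1}_u-{\bf 1}_v)_j+E_j,\qquad E_j=\sqrt{d_j}\sum_{l=1}^{n-1}\left(\frac{1}{\lambda_l}-1\right)\phi_l(j)\left(\frac{\phi_l(u)}{\sqrt{d_u}}-\frac{\phi_l(v)}{\sqrt{d_v}}\right),$$
and by the reverse triangle inequality $\left|DSD_q(u,v)-2^{1/q}\right|\le\|E\|_q$ (here I use $u\ne v$, which the statement tacitly assumes).

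Next I would bound $\|E\|_q$, the key trick being to pull the weight $\sqrt{d_j}$ out pointwise. Set $\tilde E_j:=E_j/\sqrt{d_j}$, so $|E_j|\le\sqrt{\Delta}\,|\tilde E_j|$ and $\|E\|_q\le\sqrt{\Delta}\,\|\tilde E\|_q$, while $\tilde E=\sum_{l=1}^{n-1}c_l\phi_l$ with $c_l=(1/\lambda_l-1)(\phi_l(u)/\sqrt{d_u}-\phi_l(v)/\sqrt{d_v})$. By Parseval, $\|\tilde E\|_2^2=\sum_{l=1}^{n-1}c_l^2$. The hypothesis $|1-\lambda_l|\le\epsilon$ gives $\lambda_l\ge 1-\epsilon$, hence $|1/\lambda_l-1|=|1-\lambda_l|/\lambda_l\le\epsilon/(1-\epsilon)$, so $\sum_l c_l^2\le(\epsilon/(1-\epsilon))^2\sum_{l=1}^{n-1}(\phi_l(u)/\sqrt{d_u}-\phi_l(v)/\sqrt{d_v})^2$. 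Expanding the last sum and using $\sum_{l\ge 1}\phi_l(x)^2=1-d_x/\vol(G)$ and $\sum_{l\ge 1}\phi_l(u)\phi_l(v)=-\sqrt{d_ud_v}/\vol(G)$ for $u\ne v$, all $\vol(G)$-terms telescope and the sum equals exactly $1/d_u+1/d_v$. Therefore $\|\tilde E\|_2\le\frac{\epsilon}{1-\epsilon}\sqrt{1/d_u+1/d_v}$, and $\|E\|_2\le\frac{\epsilon}{1-\epsilon}\sqrt{\Delta/d_u+\Delta/d_v}$.

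Finally I would pass from the $L_2$ estimate to the $L_q$ estimate on the $n$-dimensional vector $E$: for $q\ge 2$, $\|E\|_q\le\|E\|_2$ (monotonicity of $\ell_q$-norms), giving \eqref{eq:upperb1}; for $1\le q<2$, $\|E\|_q\le n^{1/q-1/2}\|E\|_2$ by the power-mean (H\"older) inequality, giving \eqref{eq:upperb2}. There is no deep obstacle; the one place that requires care is the exact evaluation $\sum_{l=1}^{n-1}(\phi_l(u)/\sqrt{d_u}-\phi_l(v)/\sqrt{d_v})^2=1/d_u+1/d_v$ together with the matching cancellation of the $\phi_0$ (steady-state) contribution in the ideal case — both rest on the explicit form of $\phi_0$ and on $u\ne v$ — after which everything reduces to the triangle inequality and the standard comparison of $\ell_q$-norms.
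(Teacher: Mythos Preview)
Your proof is correct and follows essentially the same strategy as the paper: decompose $({\bf 1}_u-{\bf 1}_v){\mathbb G}$ into the ``ideal'' part ${\bf 1}_u-{\bf 1}_v$ plus an error, bound the error in $L_2$ using the eigenvalue concentration, and then convert to $L_q$. The only cosmetic difference is that the paper packages the argument at the matrix level---writing ${\cal G}=I-\phi_0'\phi_0+\Upsilon$ and bounding $\|({\bf 1}_u-{\bf 1}_v)D^{-1/2}\Upsilon D^{1/2}\|_2\le\|({\bf 1}_u-{\bf 1}_v)D^{-1/2}\|_2\,\|\Upsilon\|\,\|D^{1/2}\|$---whereas you unwind the same estimate coordinate-wise via Parseval and the explicit evaluation of $\sum_{l\ge 1}(\phi_l(u)/\sqrt{d_u}-\phi_l(v)/\sqrt{d_v})^2$; these are the same computation, since $({\bf 1}_u-{\bf 1}_v)D^{-1/2}$ is orthogonal to $\phi_0$.
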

\begin{proof}
  Rewrite the normalized Green's function ${\cal G}$ as
$${\cal G}=I-\phi_0'\phi_0+\Upsilon.$$
Note that the eigenvalues of $\Upsilon:={\cal G}-I+\phi_0\phi_0'$
are $0, \frac{1}{\lambda_1}-1, \ldots,  \frac{1}{\lambda_{n-1}}-1$.
Observe that for each $i=1,2,\ldots, n-1$, 
$|\frac{1}{\lambda_{i}}-1|\leq \frac{\epsilon}{1-\epsilon}$.
We have 
 $$\|\Upsilon\|\leq \frac{\epsilon}{1-\epsilon}.$$
Thus,
\begin{align*}
  \DSD_q(u,v) &=\|({\bf 1}_u-{\bf 1}_v)D^{-1/2}{\cal G}D^{1/2}\|_q\\
&=\|({\bf 1}_u-{\bf 1}_v)D^{-1/2}(I-\phi_0'\phi+\Upsilon)D^{1/2}\|_q\\
&\leq \|({\bf 1}_u-{\bf 1}_v)D^{-1/2}(I-\phi_0'\phi)D^{1/2}\|_q
+\|({\bf 1}_u-{\bf 1}_v)D^{-1/2}\Upsilon D^{1/2}\|_q.
\end{align*}
Viewing $\Upsilon$ as the error term, we first calculate the main
term.
\begin{align*}
&\hspace*{-1cm} \|({\bf 1}_u-{\bf 1}_v)D^{-1/2}(I-\phi_0'\phi)D^{1/2}\|_q\\
 &=\|({\bf 1}_u-{\bf 1}_v)(I- W)\|_q\\
&=\|({\bf 1}_u-{\bf 1}_v)\|_q\\
&=2^{1/q}.
\end{align*}
The $L_2$-norm of the error term  can be bounded by
\begin{align*}
 &\hspace*{-1cm} \|({\bf 1}_u-{\bf 1}_v)D^{-1/2}\Upsilon D^{1/2}\|_2\\
 &\leq \|({\bf 1}_u-{\bf 1}_v)D^{-1/2}\|_2 \|\Upsilon\| \|D^{1/2}\|\\
 &\leq \sqrt{\frac{1}{d_u}+\frac{1}{d_v}} \frac{\epsilon}{1-\epsilon}
 \sqrt{\Delta}\\
&=\frac{\epsilon}{1-\epsilon}
\sqrt{\frac{\Delta}{d_u}+\frac{\Delta}{d_v}}.
\end{align*}
To get the bound of $L_q$-norm from $L_2$-norm,
 we apply the following relation of $L_q$-norm and $L_2$-norm
to the error term.
For any vector $x\in {\mathbb R}^n$,
$$\|x\|_q\leq \|x\|_2 \quad \mbox{ for } q\geq 2.$$
and
$$\|x\|_q\leq n^{\frac{1}{q}-\frac{1}{2}}\|x\|_2 \quad \mbox{ for } 1\leq q<2.$$

The inequalities \eqref{eq:upperb1} and \eqref{eq:upperb2}
follow from  the triangular inequality of the $L_q$-norm and
the upper bound of the error term.
\end{proof}

Now we consider the classical Erd\H{o}s-Renyi random graphs $G(n,p)$.
For a given $n$ and $p\in (0,1)$,  $G(n,p)$ is a
random graph on the vertex set $\{1,2,\ldots, n\}$ obtained by
adding
each pair $(i,j)$ to the edges of $G(n,p)$ with probability $p$
independently.

There are plenty of references on the concentration of the
eigenvalues of $G(n,p)$ (for example,  \cite{CR}, \cite{OL},\cite{KS}, and \cite{grgraph}).
Here we list some facts on $G(n,p)$.
\begin{enumerate}
\item For $p>\frac{(1+\epsilon)\log n}{n}$, almost surely 
$G(n,p)$ is connected.

\item For $p\gg \frac{\log n}{n}$, $G(n,p)$ is ``almost regular'';
  namely for all vertex $v$, $d_v=(1+o_n(1))np$.

\item For $np(1-p)\gg \log^4 n$, all non-zero  Laplacian eigenvalues
  $\lambda_i$'s satisfy (see \cite{grgraph})
  \begin{equation}
    \label{eq:gnp}
|\lambda_i-1|\leq \frac{(3+o_n(1))}{\sqrt{np}}.    
  \end{equation}
\end{enumerate}

Apply Lemma \ref{l1} with $\epsilon =
\frac{(3+o_n(1))}{\sqrt{np}}$, and note that $G(n,p)$ is
almost-regular. We get the following theorem.
\begin{theorem}
  For $p(1-p)\gg \frac{\log^4 n}{n}$, almost surely for
all pairs of vertices $(u,v)$, the DSD $L_q$-distance of $G(n,p)$ satisfies
$$DSD_q(u,v)=2^{1/q} \pm  O \left(\frac{1}{\sqrt{np}}\right) \quad \mbox{ if }
q\geq 2,$$
$$DSD_q(u,v)=2^{1/q} \pm O \left(\frac{n^{\frac{1}{q}-\frac{1}{2}}}{\sqrt{np}}\right) \quad \mbox{ if }
1\leq q<2.$$
\end{theorem}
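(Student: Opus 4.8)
The plan is to invoke Lemma~\ref{l1} verbatim, feeding it the three structural facts about $G(n,p)$ that were just listed. First I would pin down the high-probability event on which all three facts hold simultaneously. Under the hypothesis $p(1-p)\gg \frac{\log^4 n}{n}$ we have $np(1-p)\gg \log^4 n$, so fact~(3) gives that every non-zero Laplacian eigenvalue satisfies $|\lambda_i-1|\le \frac{3+o_n(1)}{\sqrt{np}}$ almost surely; the same hypothesis forces $p\gg \frac{\log n}{n}$ (treating the cases $p\le \frac12$ and $p\ge \frac12$ separately, the latter making $p$ close to $1$), so facts~(1) and~(2) also hold almost surely, giving connectedness and $d_v=(1+o_n(1))np$ for every vertex $v$.

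On this event I would set $\epsilon=\frac{3+o_n(1)}{\sqrt{np}}$, which lies in $(0,\frac12)$ once $n$ is large since $np\to\infty$. Almost-regularity gives $\frac{\Delta}{d_u}=1+o_n(1)$ and likewise for $d_v$, hence $\sqrt{\frac{\Delta}{d_u}+\frac{\Delta}{d_v}}=\sqrt2\,(1+o_n(1))=O(1)$, while $\frac{\epsilon}{1-\epsilon}=O(\frac{1}{\sqrt{np}})$. Substituting these into the two conclusions of Lemma~\ref{l1} yields $|DSD_q(u,v)-2^{1/q}|=O(\frac{1}{\sqrt{np}})$ for $q\ge 2$ and $|DSD_q(u,v)-2^{1/q}|=O(\frac{n^{1/q-1/2}}{\sqrt{np}})$ for $1\le q<2$, which is exactly the claimed statement. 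Uniformity over all pairs $(u,v)$ comes for free: the bound in Lemma~\ref{l1} depends on the pair only through $d_u$ and $d_v$, and the degree estimate already holds for every vertex on the chosen event, so no union bound over the ${n\choose 2}$ pairs is needed.

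The only point requiring genuine care — and the step I expect to be the mild obstacle — is verifying that $p(1-p)\gg\frac{\log^4 n}{n}$ is strong enough to trigger all three cited facts at once, in particular in the regime $p\to 1$ where $1-p$ is small. There one observes that the degree of $v$ equals $n-1$ minus its degree in the complementary random graph $G(n,1-p)$, so almost-regularity and connectivity survive, and the eigenvalue bound~(3) is already symmetric under $p\leftrightarrow 1-p$. Beyond this bookkeeping, the argument is a direct substitution into Lemma~\ref{l1}; no new probabilistic estimate is required beyond the quoted concentration results.
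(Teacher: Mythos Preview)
Your proposal is correct and follows exactly the paper's own approach: the paper's entire proof is the single sentence ``Apply Lemma~\ref{l1} with $\epsilon = \frac{(3+o_n(1))}{\sqrt{np}}$, and note that $G(n,p)$ is almost-regular,'' which is precisely what you spell out. Your additional remarks on why the hypothesis $p(1-p)\gg\frac{\log^4 n}{n}$ suffices for all three facts (and the $p\to 1$ bookkeeping) are welcome elaborations that the paper itself omits.
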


Now we consider the random graphs with given expected degree sequence
$G(w_1,\ldots, w_n)$ (see \cite{BHL}, \cite{CL02a}, \cite{CL02b},
\cite{book}, 
\cite{Hofstad2013}).
It is defined
as follows:
\begin{enumerate}
\item Each vertex $i$ (for $1\leq i \leq n$) is associated with a
  given positive weight $w_i$.
\item Let $\rho=\frac{1}{\sum_{i=1}^nw_i}$.
For each pair of vertices $(i,j)$, 
$ij$ is added as an edge with probability $w_iw_j\rho$ independently.
($i$ and $j$ may be equal so loops are allowed. Assume $w_iw_j\rho\leq
1$ for $i,j$.)
\end{enumerate}

Let $w_{min}$  be the minimum weight.
There are many references on the concentration of the
eigenvalues of $G(w_1,\ldots, w_n)$ (see \cite{CLV1}, \cite{CLV2}, \cite{CR},
 \cite{OL}, \cite{grgraph}).
The version used here is in \cite{grgraph}.
\begin{enumerate}
\item For each vertex $i$, the expected degree of $i$ is $w_i$.
\item Almost surely for all $i$ with $w_i\gg \log n$, then the degree
  $d_i=(1+o(1))w_i$.
\item If $w_{min} \gg \log^4 n$, all non-zero  Laplacian eigenvalues
  $\lambda_i$ (for $1\leq i\leq n-1$), 
  \begin{equation}
    \label{eq:gw}
    |1-\lambda_i|\leq \frac{3+o_n(1)}{\sqrt{w_{min}}}.
  \end{equation}
\end{enumerate}

\begin{theorem}
 Suppose $w_{min} \gg \log^4 n$, almost surely for
all pairs of vertices $(u,v)$, the DSD $L_q$-distance of
$G(w_1,\ldots, w_n)$
 satisfies
$$DSD_q(u,v)=2^{1/q} \pm O\left(\frac{1}{\sqrt{w_{min}}}
\sqrt{\frac{w_{max}}{w_u}+\frac{w_{max}}{w_v}}
\right)\quad \mbox{ if } q\geq 2,
$$
$$DSD_q(u,v)=2^{1/q} \pm O\left(\frac{n^{\frac{1}{q}-\frac{1}{2}}}{\sqrt{w_{min}}}
\sqrt{\frac{w_{max}}{w_u}+\frac{w_{max}}{w_v}}
\right)\quad \mbox{ if } 1\leq q < 2.
$$
\end{theorem}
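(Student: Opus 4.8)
The plan is to apply Lemma~\ref{l1} directly, using the stated concentration result for $G(w_1,\ldots,w_n)$ as a black box. First I would invoke the facts listed just above the theorem: under the hypothesis $w_{min}\gg\log^4 n$, inequality \eqref{eq:gw} gives that all nonzero Laplacian eigenvalues satisfy $|1-\lambda_i|\leq \frac{3+o_n(1)}{\sqrt{w_{min}}}=:\epsilon$ almost surely. Since $w_{min}\gg\log^4 n\to\infty$, this $\epsilon$ lies in $(0,1/2)$ for $n$ large, so Lemma~\ref{l1} applies with this choice of $\epsilon$.

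Next I would translate the conclusion of Lemma~\ref{l1} into the degree-sequence language. Lemma~\ref{l1} yields, for all pairs $(u,v)$,
\begin{align*}
|DSD_q(u,v)-2^{1/q}|&\leq \frac{\epsilon}{1-\epsilon}\sqrt{\frac{\Delta}{d_u}+\frac{\Delta}{d_v}}\quad (q\geq 2),\\
|DSD_q(u,v)-2^{1/q}|&\leq n^{\frac1q-\frac12}\frac{\epsilon}{1-\epsilon}\sqrt{\frac{\Delta}{d_u}+\frac{\Delta}{d_v}}\quad (1\leq q<2).
\end{align*}
Then I use the degree-concentration fact: almost surely for every $i$ with $w_i\gg\log n$ (which holds for all $i$ since $w_i\geq w_{min}\gg\log^4 n$), we have $d_i=(1+o(1))w_i$; in particular $\Delta=(1+o(1))w_{max}$, $d_u=(1+o(1))w_u$, and $d_v=(1+o(1))w_v$. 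Substituting and absorbing $\frac{1}{1-\epsilon}=1+o(1)$ and the $(1+o(1))$ degree factors into the big-$O$ constant, the ratio $\frac{\Delta}{d_u}$ becomes $(1+o(1))\frac{w_{max}}{w_u}$, giving the claimed bounds with $\epsilon=O(1/\sqrt{w_{min}})$.

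Two small points need care. One is that all the "almost surely" events — connectivity/positivity of $\lambda_1$ implicit in Lemma~\ref{l1}'s applicability, the eigenvalue concentration \eqref{eq:gw}, and the simultaneous degree concentration over all vertices — must hold together; since each fails with probability $o(1)$, a union bound over this constant number of events gives a single event of probability $1-o(1)$ on which the whole argument runs, and the "for all pairs $(u,v)$" quantifier is already built into Lemma~\ref{l1}. The other is that \eqref{eq:gw} as stated controls $|1-\lambda_i|$ for $1\leq i\leq n-1$, which is exactly the hypothesis of Lemma~\ref{l1}; I should note that $w_{min}\gg\log^4 n$ in particular forces $\lambda_1>0$ (the graph is connected a.s.), so Green's function and hence $DSD_q$ is well-defined.

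Honestly there is no real obstacle here: the theorem is a direct corollary of Lemma~\ref{l1} plus the imported concentration estimates, and the only "work" is the bookkeeping of $(1+o(1))$ factors and the union bound over the finitely many almost-sure events. If anything, the mildly delicate step is making sure the $1\leq q<2$ case's extra factor $n^{1/q-1/2}$ is carried through verbatim from Lemma~\ref{l1} rather than silently dropped, and confirming that no hypothesis beyond $w_{min}\gg\log^4 n$ is needed (in particular one does not need $w_{max}$ bounded or any ratio condition, since those only affect the size of the error term, not its validity).
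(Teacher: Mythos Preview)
Your proposal is correct and matches the paper's approach exactly: the paper does not even write out a separate proof for this theorem, treating it as an immediate consequence of Lemma~\ref{l1} applied with $\epsilon=\frac{3+o_n(1)}{\sqrt{w_{min}}}$ from \eqref{eq:gw} together with the listed degree-concentration fact $d_i=(1+o(1))w_i$. Your added remarks about the union bound and connectivity are sound bookkeeping that the paper leaves implicit.
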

 
\section{Examples of  biological  networks}
In this section, we will examine the distribution of the DSD distances for some
biological networks.
 The set of graphs analyzed in this section
include three graphs of brain data from the Open Connectome Project
\cite{OCP} and two more graphs built from the \textit{S. cerevisiae}
PPI network and \textit{S. pombe} PPI network used in
\cite{DSD}. Figure 1 and 2 serves as a visual representation of one of the
two brain data graphs: the graph of a cat and  the graph of a Rhesus monkey.
The network of the cat brain has 65 nodes and 1139 edges while
the network of rhesus monkey brain has 242 nodes and 4090 edges.

\begin{figure}[htbp]
 \centering
\psfig{figure=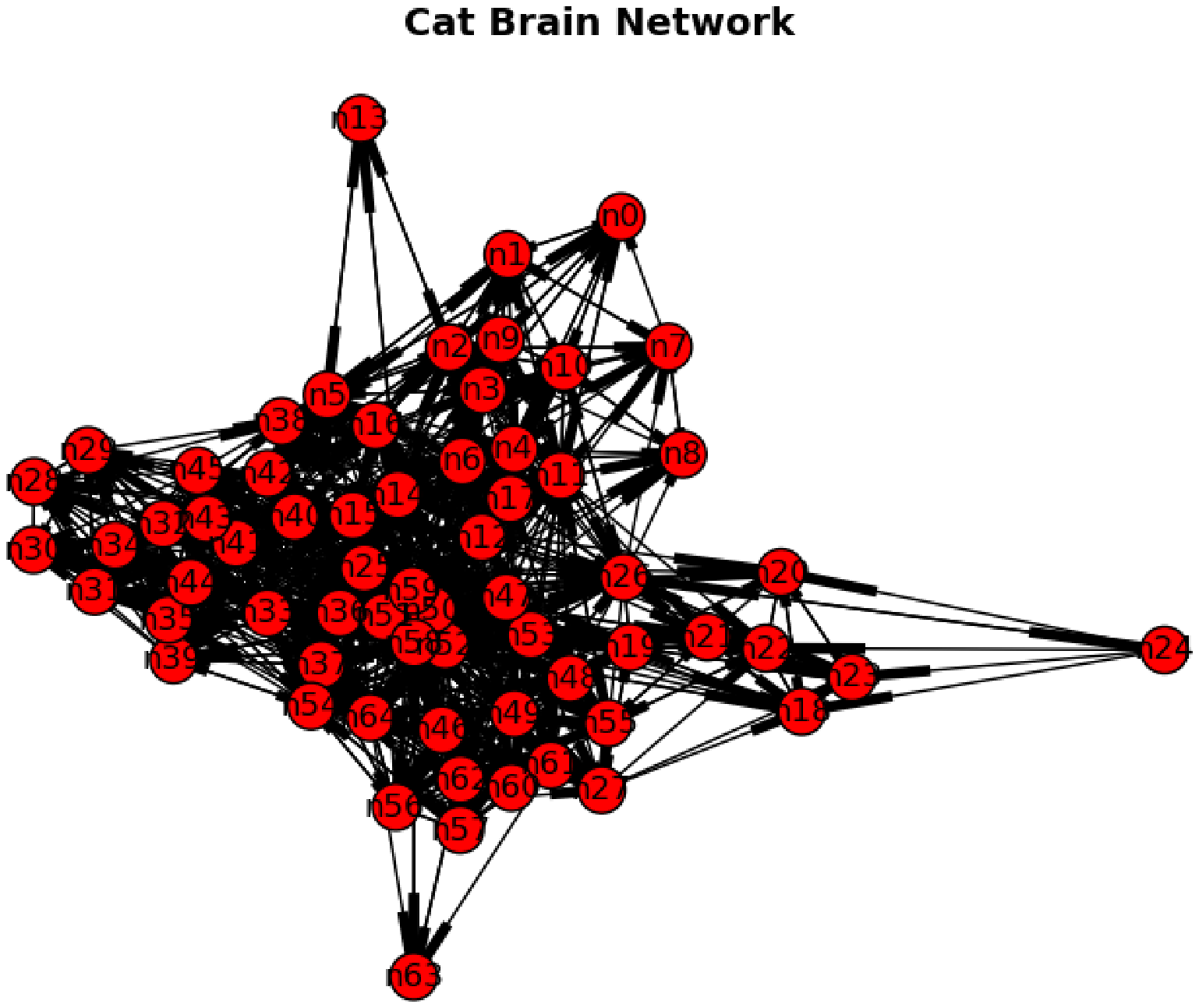, width=0.45\linewidth, height=0.40\linewidth}
\hfil
\psfig{figure=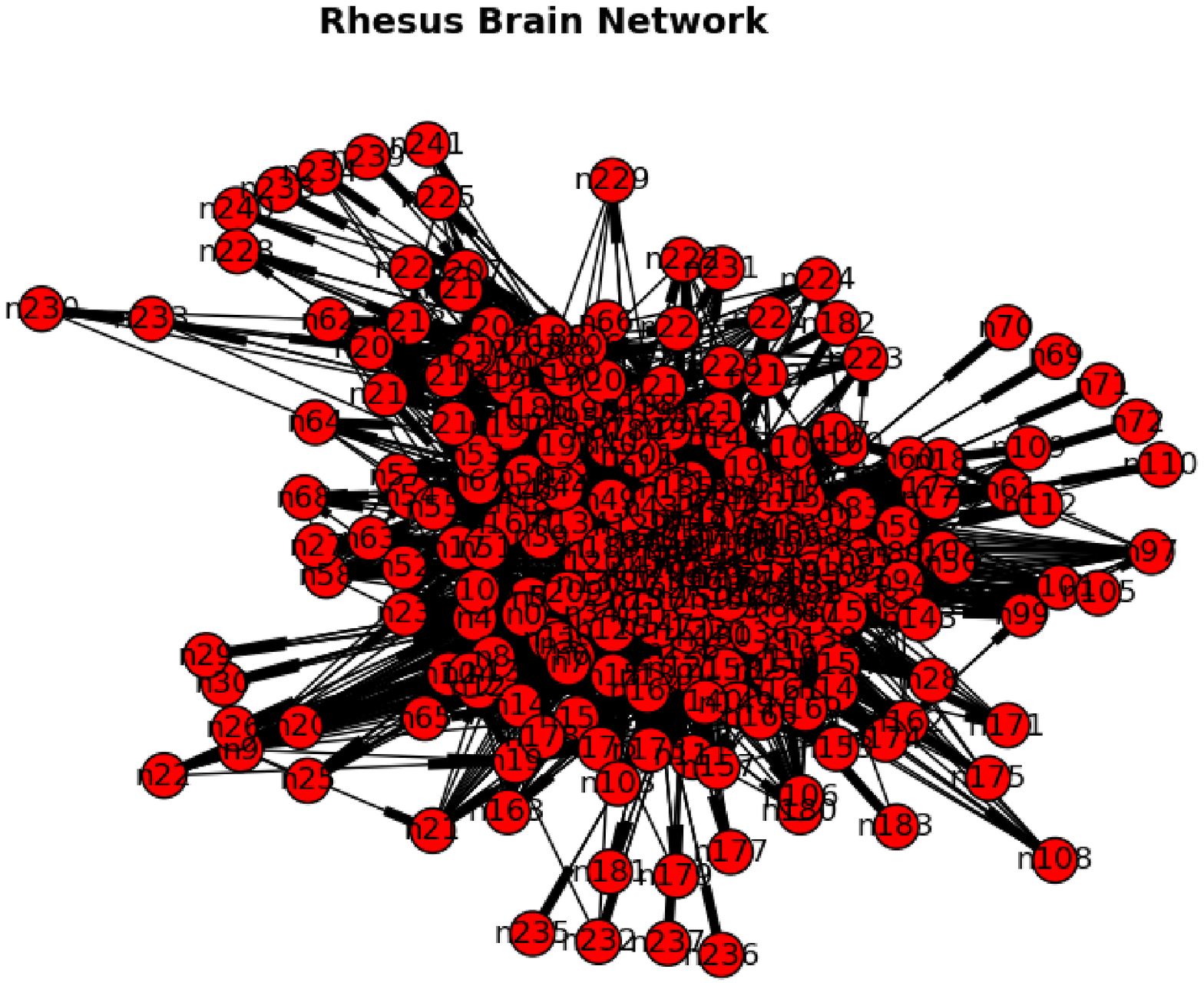, width=0.45\linewidth, height=0.40\linewidth}
\caption{The brain networks: (a), a Cat; (b): a Rhesus Monkey} 
\label{fig:1}
\end{figure}

 Each node in the
Rhesus graph represents a region in the cerebral cortex originally
analyzed in \cite{Olaf}. Each edge represents axonal connectivity
between regions and there is no distinction between strong and weak
connections in this graph \cite{Olaf}.  The Cat data-set follows a
similar pattern where each node represents a region of the brain and
each edge represents connections between them. The Cat data-set
represents 18 visual regions, 10 auditory regions, 18 somatomotor
regions, and 19 frontolimbic regions\cite{Reus}. 

For each network above, we calculated all-pair DSD $L_1$-distances. 
Divide the possible values into many small intervals 
and compute the number of pairs falling into each interval. The
results are shown in Figure \ref{fig:1}. The patterns are quite
surprising to us.

\begin{figure}[htbp]
 \centering
\psfig{figure=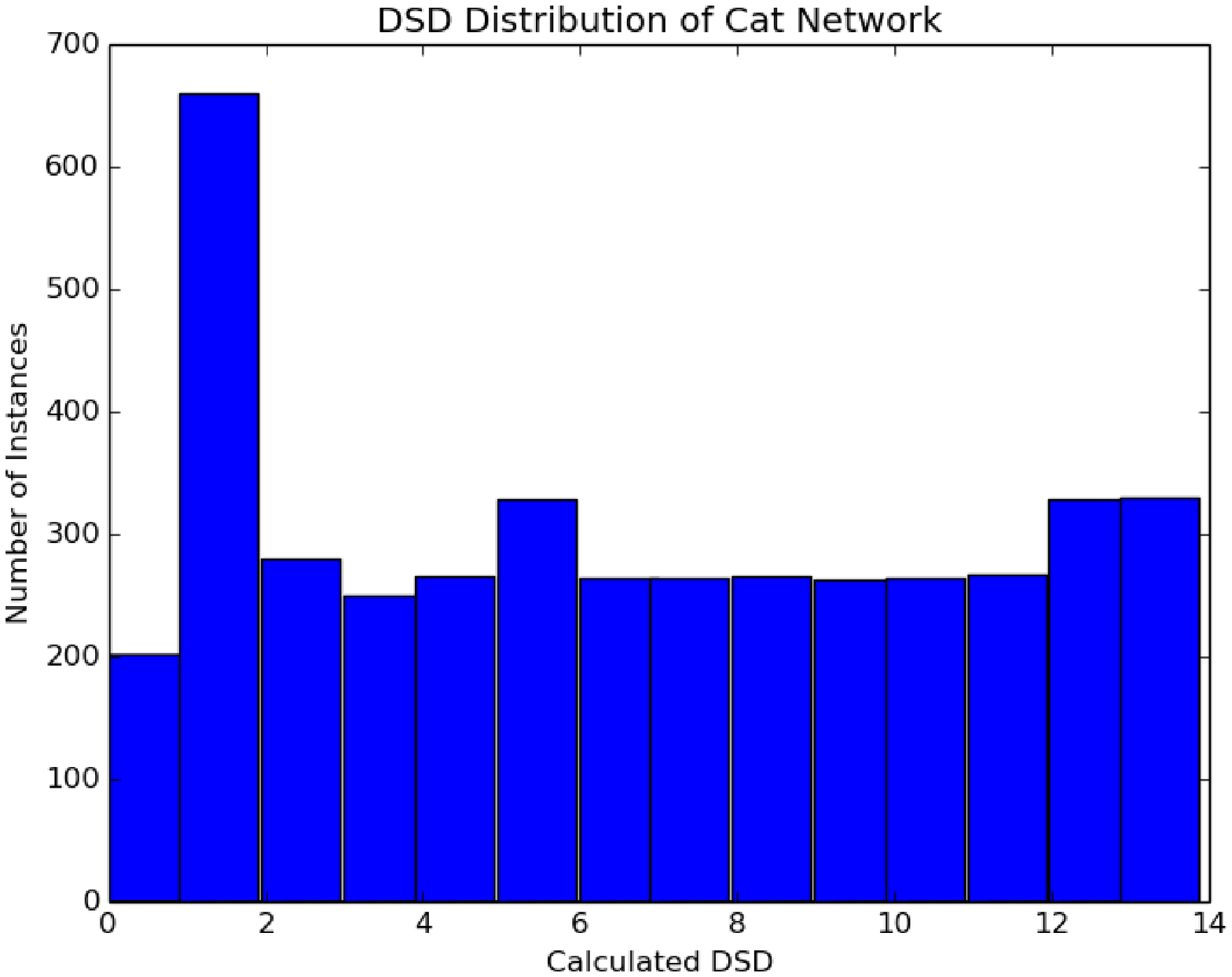, width=0.45\linewidth, height=0.45\linewidth}
\hfil
\psfig{figure=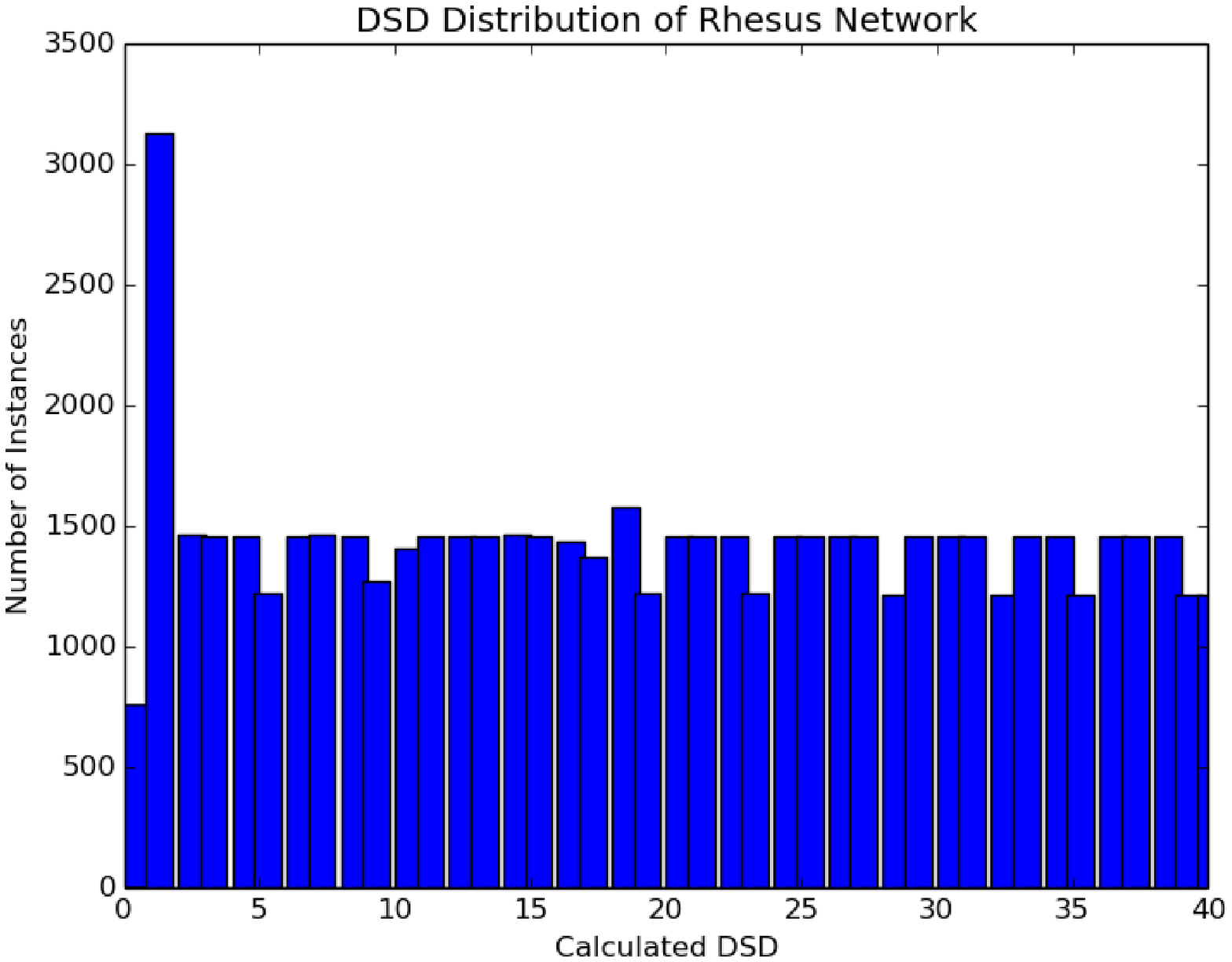, width=0.45\linewidth, height=0.45\linewidth}

\caption{The distribution of the DSD $L_1$-distances of brain networks: (a), a Cat; (b): a Rhesus Monkey} 
\label{fig:2}
\end{figure}

Both graphs has a small interval consisting of many
pairs while other values are more or less uniformly distributed.
We think, that phenomenon might be caused by the clustering of
a dense core. The two graphs have many branches sticking out. Since we
are using $L_1$-distance, it doesn't matter the directions of these
branches sticking out when they are embedded into $\mathbb{R}^n$ using
Green's function. 

When we change $L_1$-distance to $L_2$-distance, the pattern should be
broken. This is confirmed in Figure \ref{fig:3}. The actual
distributions are mysterious to us.

\begin{figure}[htbp]
 \centering
\psfig{figure=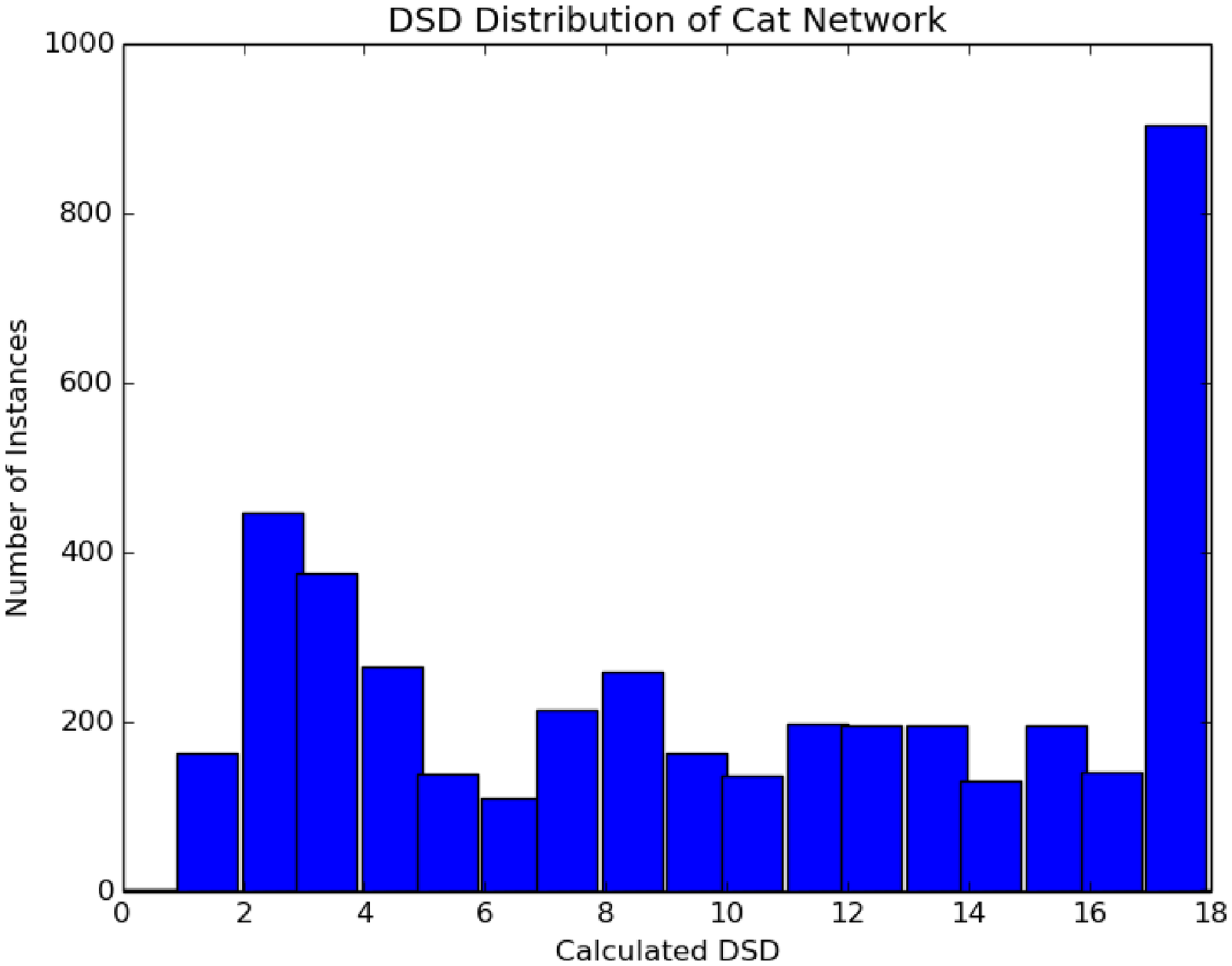, width=0.45\linewidth, height=0.45\linewidth}
\hfil
\psfig{figure=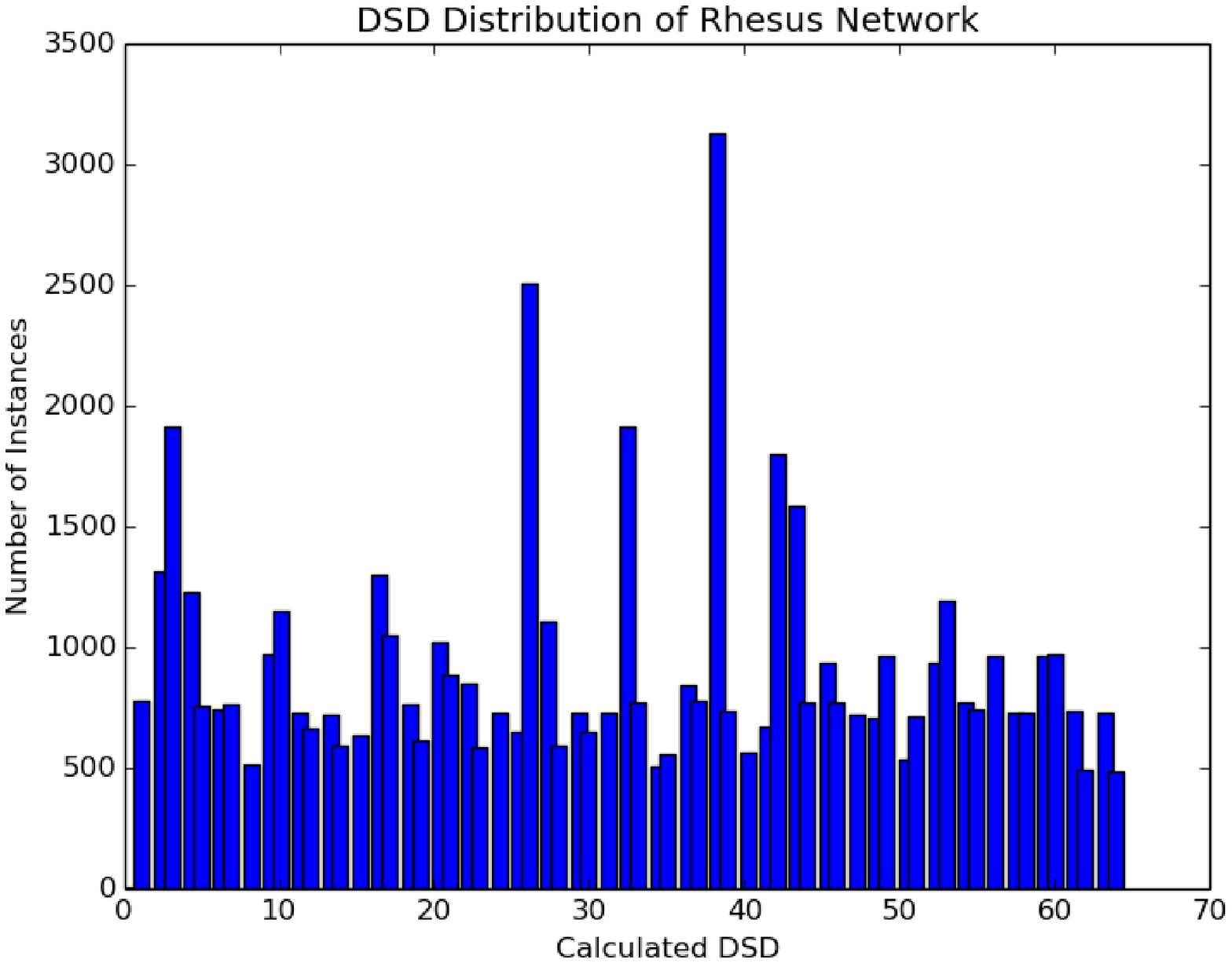, width=0.45\linewidth, height=0.45\linewidth}
\caption{The distribution of the DSD $L_2$-distances of brain networks: (a), a Cat; (b): a Rhesus Monkey} 
\label{fig:3}
\end{figure}

 \end{document}